\newcommand{\mc}{\mathcal}
\newcommand{\ms}{\mathscr}
\newcommand{\mb}{\mathbb}
\newcommand{\mr}{\mathrm}
\newcommand{\aveN}{\frac{1}{N}\sum_{n=1}^N}
\newtheorem{Thm}{Theorem}[section] 
\newtheorem*{Rem}{Remark}
\newtheorem{Exam}{Example}
\newtheorem{Le}[Thm]{Lemma}
\newtheorem{Cor}[Thm]{Corollary} 
\newtheorem{Prop}[Thm]{Proposition}
\theoremstyle{definition}
\newtheorem{Def}{Definition}[section]
\theoremstyle{plain}
\newcommand{\lin}{\text{lin}}
\title{Almost everywhere convergence of entangled ergodic averages}
\author{D\'avid Kunszenti-Kov\'acs}
\address{MTA Alfréd Rényi Institute of Mathematics, P.O. Box 127, H-1364 Budapest, Hungary}
\email{daku@renyi.hu}
\keywords{Entangled ergodic averages, pointwise convergence, unimodular eigenvalues, polynomial ergodic averages} 
\subjclass{Primary: 47A35; Secondary: 37A30}
\thanks{The author has received funding from the European Research Council under the European Union's Seventh Framework Programme (FP7/2007-2013) / ERC grant agreement $\mr{n}^\circ$617747, and from the MTA R\'enyi Institute Lend\"ulet Limits of Structures Research Group.}
\begin{document}

\maketitle

\begin{abstract}
We study pointwise convergence of entangled averages of the form
\[
\frac{1}{N^k}\sum_{1\leq n_1,\ldots, n_k\leq N} T_m^{n_{\alpha(m)}}A_{m-1}T^{n_{\alpha(m-1)}}_{m-1}\ldots A_2T_2^{n_{\alpha(2)}}A_1T_1^{n_{\alpha(1)}} f,
\]
where $f\in L^2(X,\mu)$, $\alpha:\left\{1,\ldots,m\right\}\to\left\{1,\ldots,k\right\}$, and the $T_i$ are ergodic measure preserving transformations on the standard probability space $(X,\mu)$. We show that under some joint boundedness and twisted compactness conditions on the pairs $(A_i,T_i)$, almost everywhere convergence holds for all $f\in L^2$. We also present results for the general $L^p$ case ($1\leq p<\infty$) and for polynomial powers, in addition to continuous versions concerning ergodic flows.
\end{abstract}


\section{Introduction}

Entangled ergodic averages go back to a paper by Accardi, Hashimoto and Obata \cite{AHO}, where these were introduced motivated by questions from quantum stochastics. Given a map $\alpha:\left\{1,\ldots,m\right\}\to\left\{1,\ldots,k\right\}$ and operators $A_i$ ($1\leq i\leq m-1$) and $T_i$ ($1\leq i\leq m$) on a Banach space $E$, the corresponding entangled averages are the multi-Cesàro means
\[
\frac{1}{N^k}\sum_{1\leq n_1,\ldots, n_k\leq N} T_m^{n_{\alpha(m)}}A_{m-1}T^{n_{\alpha(m-1)}}_{m-1}\ldots A_2T_2^{n_{\alpha(2)}}A_1T_1^{n_{\alpha(1)}}.
\]
Originally, the question was about norm convergence of such averages, and this was further studied by Liebscher \cite{liebscher:1999}, Fidaleo \cite{fidaleo:2007,fidaleo:2010,fidaleo:2014} and Eisner and the author \cite{EKK}.\\
In Eisner, K.-K. \cite{EKK2}, attention was turned to pointwise almost everywhere convergence in the context of the $T_i$'s being Koopman operators on function spaces $E=L^p(X,\mu)$ ($1\leq p<\infty$), where $(X,\mu)$ is a standard probability space (i.e. a compact metrizable space with a Borel probability measure). The paper covered the one-parameter ($k=1$) case, and in the present paper, we push the ideas and methods presented there to study pointwise almost everywhere convergence of entangled ergodic averages in their full generality, allowing for multi-parameter entanglement.\\

Note that in what follows, given a measure preserving transformation $S$ on a standard probability space $(X,\mu)$ and a measurable function $g$ on $X$, we shall write $Sg$ for the function $g\circ S$, i.e., we do not make a distinction between the transformation $S$ and the induced contractive operator on $L^q(X)$ ($1\leq q\leq\infty$).\\
Our main result is as follows.

\begin{Thm}\label{thm:main}
Let $m>1$ and $k$ be positive integers, $\alpha:\left\{1,\ldots,m\right\}\to\left\{1,\ldots,k\right\}$ a not necessarily surjective map, and $T_1,T_2,\ldots, T_m$ 
ergodic measure preserving transformations on a standard probability space $(X,\mu)$.
Let $p\in[1,\infty)$, $E:=L^p(X,\mu)$ and let $E=E_{j,r}\oplus E_{j,s}$ be the Jacobs-Glicksberg-deLeeuw decomposition corresponding to $T_j$ $(1\leq j\leq m)$. Let further $A_j\in\mc{L}(E)$ $(1\leq j< m)$ be bounded operators. For a function $f\in E$ and an index $1\leq j\leq m-1$, write $\ms{A}_{j,f}:=\left\{A_jT_j^nf\left|\right.n\in\mb{N}\right\}$. Suppose that the following conditions hold:
\begin{itemize}
\item[(A1)](Twisted compactness)
For any function $f\in E$, index $1\leq j\leq m-1$ and $\varepsilon>0$, there exists a decomposition $E=\mc{U}\oplus \mc{R}$ with $\dim \mc{U}<\infty$ 
such that
\[
P_\mc{R}\ms{A}_{j,f}
\subset B_\varepsilon(0,L^\infty(X,\mu)),
\]
with $P_\mc{R}$ denoting the projection along $\mc{U}$ onto $\mc{R}$.
\item[(A2)](Joint $\mc{L}^\infty$-boundedness)
There exists a constant $C>0$ such that we have
\[\{A_jT^n_j|n\in\mb{N},1\leq j\leq m-1\}\subset B_C(0,\mc{L}(L^\infty(X,\mu)).
\]
\end{itemize}
Then we have the following:
\begin{enumerate}
\item for each $f\in E_{1,s}$, 
\[
\frac{1}{N^k}\sum_{1\leq n_1,\ldots, n_k\leq N} \left|T_m^{n_{\alpha(m)}}A_{m-1}T^{n_{\alpha(m-1)}}_{m-1}\ldots A_2T_2^{n_{\alpha(2)}}A_1T_1^{n_{\alpha(1)}} f \right|\rightarrow 0
\]
pointwise a.e.;
\item if $p=2$, then for each $f\in E_{1,r}$, 
\[
\frac{1}{N^k}\sum_{1\leq n_1,\ldots, n_k\leq N} T_m^{n_{\alpha(m)}}A_{m-1}T^{n_{\alpha(m-1)}}_{m-1}\ldots A_2T_2^{n_{\alpha(2)}}A_1T_1^{n_{\alpha(1)}} f
\]
converges pointwise a.e..
\end{enumerate}
\end{Thm}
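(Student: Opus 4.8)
We argue by induction on $m$, the number of twisted layers, peeling off the innermost pair $(A_1,T_1)$ so as to reduce a length-$m$ average to a length-$(m-1)$ one; the base $m=1$, where the average collapses to an ordinary ergodic average, is Birkhoff's theorem (and, once geometric weights appear, the Wiener--Wintner theorem), conclusion~(1) being established afresh at each level $m\ge 2$. Two preliminary reductions are needed. First, a maximal estimate for the absolute averages in conclusion~(1): one shows that $g\mapsto\sup_N\frac1{N^k}\sum_{\vec n}\bigl|T_m^{n_{\alpha(m)}}A_{m-1}\cdots A_1T_1^{n_{\alpha(1)}}g\bigr|$ maps $L^p$ into $L^{p,\infty}$ (into $L^p$ for $p>1$), which via the usual density argument reduces both conclusions to dense subspaces; I would prove this bound by the same induction, splitting off the outermost $T_m$ (invoking the Dunford--Schwartz maximal theorem for it) and controlling the remaining length-$(m-1)$ positive average using (A1) and (A2), a Banach-principle / Nikishin-type argument being convenient to upgrade a.e.\ finiteness to a weak-type bound. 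Second, for conclusion~(2) the natural dense subspace of $E_{1,r}$ is spanned by the unimodular eigenfunctions of $T_1$; since $T_1$ is ergodic each has constant modulus and hence lies in $L^\infty$, and (A2) then keeps $A_1f$ and all its later iterates in $L^\infty$ with controlled norm, which is exactly what makes the $L^\infty$-small remainder furnished by (A1) harmless when pushed through the subsequent layers.

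For an eigenfunction $f$ with $T_1f=\lambda f$, $|\lambda|=1$, write $g:=A_1f\in L^\infty$ and extract the scalar $\lambda^{n_{\alpha(1)}}$, obtaining a length-$(m-1)$ entangled average for $T_2,\dots,T_m$ weighted by $\lambda^{n_{\alpha(1)}}$. If $\alpha(1)$ does not recur among $\alpha(2),\dots,\alpha(m)$, this weight factors out as $\frac1N\sum_{s\le N}\lambda^s$, which tends to $1$ if $\lambda=1$ and to $0$ otherwise, and the remaining factor converges a.e.\ by the inductive hypothesis applied to the Jacobs-Glicksberg-deLeeuw decomposition $g=g_r\oplus g_s$ with respect to $T_2$ (conclusion~(2) for $g_r$ and conclusion~(1) for $g_s$, via $|M_Ng_s|\le M_N^{\mathrm{abs}}g_s$). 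If instead $\alpha(1)$ coincides with some later $\alpha(j)$, the weight stays attached to the $j$-th slot, where $T_j^{n}$ is effectively replaced by $(\lambda T_j)^{n}$; to handle this one splits the inner argument into its $T_j$-reversible and $T_j$-stable parts and invokes a (uniform) Wiener--Wintner theorem for the ergodic system $(X,T_j)$ together with the matching maximal inequality for the twisted averages. Replacing Wiener--Wintner by Bourgain's polynomial Wiener--Wintner theorem would yield the polynomial-power variant.

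For the stable part, $f\in E_{1,s}$, no eigenfunction is available, so one applies (A1) directly to the triple $(A_1,T_1,f)$ -- valid for every $f\in E$ -- and writes $A_1T_1^nf=u_n+r_n$ with $u_n$ lying in a fixed finite-dimensional subspace $\mc{U}$ and $\|r_n\|_\infty<\varepsilon$. Propagating $r_n$ through the remaining operators costs at most $C^{m-2}\varepsilon$ in $L^\infty$, uniformly in $N$, hence is negligible for the $\limsup$. Writing $u_n=\sum_l c_l(n)v_l$ in a fixed basis of $\mc{U}$, the surviving term is a finite sum, over $l$, of length-$(m-1)$ absolute entangled averages of the fixed functions $v_l$ weighted by $|c_l(n_{\alpha(1)})|$; since $c_l(n)=\langle T_1^nf,\psi_l\rangle$ for suitable functionals $\psi_l$ and, $T_1$ being ergodic, $E_{1,s}$ carries a continuous maximal spectral type, Wiener's lemma gives $\frac1N\sum_{n\le N}|c_l(n)|\to 0$. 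Feeding this into the weighted length-$(m-1)$ averages -- whose maximal functions are a.e.\ finite by induction -- yields conclusion~(1) when $\alpha(1)$ is a private index, while when $\alpha(1)$ is shared the vanishing-mean weight must again be absorbed by a van der Corput / Wiener--Wintner cancellation rather than merely estimated. For $p\ne 2$ one first passes to $L^2\cap L^p$, dense in $E_{1,s}$, to make the spectral argument available and uses the boundedness of eigenfunctions to identify the two stable parts; the endpoint $p=1$ is handled by approximation.

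The principal obstacle, already present for a single twisted layer but genuinely compounded by entanglement, is the interaction of two features. On the one hand the operators $A_j$ are not positive, so absolute values cannot be pushed through them and the maximal estimates underlying the density reductions cannot be obtained by the usual ``push the modulus inside'' trick; one must first perform the twisted-compactness splitting and only then control the resulting positive entangled averages by (multi-parameter) Dunford--Schwartz-type maximal theorems, the combinatorics of a non-injective $\alpha$ requiring care in how individual parameters are frozen. On the other hand, whenever $\alpha$ identifies the innermost index with a later one, the scalar weights that carry the cancellation -- $\lambda^n$ in the reversible case and the vanishing-mean sequences $|c_l(n)|$ in the stable case -- become entangled with an interior slot and can no longer be estimated away: these call for uniform Wiener--Wintner-type theorems and matching maximal inequalities for the twisted averages. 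The remaining work -- the book-keeping of the induction, the $L^p$ and $L^1$ technicalities, and the continuous-time flow version -- is comparatively routine.
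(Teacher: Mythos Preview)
Your outline differs substantially from the paper's proof, and while parts of it could be made to work, the central organising principle you propose --- a maximal inequality for the absolute entangled averages, proved by induction and a Banach-principle/Nikishin argument --- is precisely what the paper avoids, and for good reason. The decomposition furnished by (A1) depends on both $f$ and $\varepsilon$; it does not give a bound on $\bigl|T_m^{n_{\alpha(m)}}A_{m-1}\cdots A_1T_1^{n_{\alpha(1)}}g\bigr|$ that is uniform over $g$ in the unit ball, so there is no obvious way to extract from it a sublinear operator to which Nikishin-type arguments apply. The paper sidesteps this entirely: for a \emph{fixed} $f$ and a fixed $\varepsilon>0$ it shows, by iterating the (A1)-splitting through every layer up to $T_m$, that the pointwise $\limsup$ is at most $2\varepsilon$ on a full-measure set $S_\varepsilon$; intersecting over $\varepsilon=1/n$ finishes. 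No density argument and hence no maximal function is needed.

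Two of your detours are also unnecessary. First, the Ces\`aro vanishing $\tfrac1N\sum_n|c_l(n)|\to 0$ is immediate from the \emph{definition} of $E_{1,s}$ used in the paper (namely $(\varphi(T_1^nf))_n\in\mc N$ for every $\varphi\in E'$), so Wiener's lemma and the continuous spectral type are superfluous, and the argument works directly in $L^p$ without first passing to $L^2$. Second, your worry that a shared index $\alpha(1)=\alpha(j)$ forces a van~der~Corput or Wiener--Wintner cancellation in part~(1) is an artefact of trying to do a clean induction on $m$. In the paper's scheme one keeps splitting via (A1) until reaching the outermost $T_m$; at that stage the surviving ``main'' term has the form $\lambda_{j,n_{\alpha(1)}}\cdot(\text{other bounded }\lambda\text{'s})\cdot T_m^{n_{\alpha(m)}}\widetilde g$ with $\widetilde g\in L^\infty$, and the trivial bound $|T_m^{n}\widetilde g|\le\|\widetilde g\|_\infty$ together with $(\lambda_{j,n})\in\mc N$ kills the average regardless of how the indices are entangled.

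For part~(2) the paper's mechanism is close in spirit to yours but lighter: working in $L^2$, an orthonormal eigenbasis of $E_{1,r}$ and Cauchy--Schwarz give that every coefficient sequence $\lambda_{j,n}=\langle T_1^nf,A_1^*\varphi_j\rangle$ lies in the class $\ms P$ of almost periodic sequences. One then splits each $g_j$ into $T_2$-stable and $T_2$-reversible parts; the stable part is disposed of by part~(1), and for the reversible part the process repeats. At the final stage the products of the $\lambda$'s attached to each index value are again in $\ms P$, and one uses only that almost periodic sequences are good weights for the pointwise ergodic theorem (and have Ces\`aro limits) --- no uniform Wiener--Wintner statement is required.
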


\begin{Rem}
Note that it was proven in \cite{EKK2} that the Volterra operator $V$ on $L^2([0,1])$ defined through
\[
(Vf)(x):=\int_{0}^x f(z) \mr{d}z
\]
as well as all of its powers can be decomposed into a finite sum of operators, each of which satisfy conditions (A1) and (A2) when paired with any Koopman operator. Hence the conclusions of Theorem \ref{thm:main} apply whenever the operators $A_i$ are chosen to be powers of $V$.
\end{Rem}


\section{Notations and tools}\label{sec:prelim}

Before proceeding to the proof of our main result, we need to clarify some of the notions used, and introduce notations that will simplify our arguments.

Let $\mc{N}$ denote the set of all bounded sequences $\{a_n\}\subset \ell^\infty(\mb{C})$ satisfying
$$
\lim_{N\to\infty}\aveN |a_n|=0.
$$
By the Koopman-von Neumann lemma, see e.g.~Petersen \cite[p. 65]{petersen:1983}, $(a_n)\in\mc{N}$ if and only if it lies in $\ell^\infty$ and converges to $0$ along a sequence of density $1$.\\

\begin{Def}
Given a Banach space $E$ and an operator $T\in\mc{L}(E)$, the operator $T$ is said to have \emph{relatively weakly compact orbits} if for each $f\in E$ the orbit set $\left\{ T^nf|n\in\mb{N}^+\right\}$ is relatively weakly closed in $E$. For any such operator, there exists a corresponding \emph{Jacobs-Glicksberg-deLeeuw} decomposition of the form (cf. \cite[Theorem II.4.8]{eisner-book})
$$E=E_r \oplus E_s,$$
where
\begin{eqnarray*}
E_r&:=&\overline\lin\{f\in E:\ Tf=\lambda f\mbox{ for some } |\lambda|=1\},\\
E_s&:=&\{f\in E:\ (\varphi( T^nf))\in\mc{N} \mbox{ for every } \varphi\in E'\}.
\end{eqnarray*}
\end{Def}

Recall that power bounded operators on reflexive Banach spaces as well as positive contractions $T$ on $L^1(X,\mu)$ with $T\mathds{1}=\mathds{1}$ (cf. \cite[Theorem II.4.8]{eisner-book}) all have relatively weakly compact orbits. In particular, for all $1\leq p<\infty$, any Koopman operator on $E=L^p(X,\mu)$ has this property, and thus the corresponding decomposition above exists.

\begin{Def}
A sequence $(a_n)_{n=1}^\infty\subset \mb{C}$ is called a \emph{good weight for the pointwise ergodic theorem} if for every measure preserving system $(X,\mu,T)$ and every $f\in L^1(X,\mu)$, the weighted ergodic averages 
$$
\aveN a_n T^nf
$$
converge almost everywhere as $N\to\infty$.
\end{Def}

Denote by $\ms{P}\subset \ell^\infty$ the set of almost periodic sequences, i.e., uniform limits of finite linear combinations of sequences of the form $(\lambda^n)$, $|\lambda|=1$. 
Such sequences play an important role in pointwise ergodic theorems.
Indeed, every element in  $\ms{P}$ is a good weight for the pointwise ergodic theorem.  Also, it can easily be checked that the set $\ms{P}$ is closed under (elementwise) multiplication.
For more details and the first part of the following example we refer to e.g.~\cite{E}.

\begin{Exam}\label{ex:alm-per}
\begin{enumerate}
\item
Let $T$ have relatively weakly compact orbits on a Banach space $E$, $f\in E_r$ and $\varphi\in E'$. Then $(\varphi(T^n f))\in \ms{P}$.
\item
Let $(a_n)_{n\in\mb{N}^+}\subset\ell^\infty$ 
such that for some $(q_n)_{n\in\mb{N}^+}\in\ell^1$ and $(\gamma_n)\subset \mb{C}$ with  
$|\gamma_n|=1$, $n\in\mb{N}$,
$$
a_n=
\sum_{k=0}^\infty \gamma_k^n\cdot q_k\quad \forall n\in\mb{N}.
$$
Then $(a_n)\in \ms{P}$.
\end{enumerate}
\end{Exam}

%
%
%
%
%

%
%

\section{Proof of Theorem \ref{thm:main}}\label{sec:general-case}

We shall proceed by successive splitting and reduction. For each operator $T_i$, starting from $T_2$, we split the functions it is applied to into several terms using condition (A1). Most of the obtained terms can be easily dealt with, but for the remaining ''difficult'' terms, we move on to $T_{i+1}$, up to and including $T_m$.\\
We first prove part (1), and then use this result to complete the proof for part (2). The details for the cases $m=2$ and $m=3$ will be worked out fully, and we shall then show how the proof extends to $m>3$.

Let $f\in E$ and $\epsilon>0$ be given. Then by assumption (A1) we have a finite-dimensional subspace $\mc{U}=\mc{U}(f,\varepsilon/C^{m-1})\subset E$ and a decomposition $E=\mc{U}\oplus \mc{R}$ such that
\[
P_\mc{R}\ms{A}_{1,f}\subset B_{\varepsilon/C^{m-1}}(0,L^\infty(X,\mu)).
\]
Choose a maximal linearly independent set $g_1,\ldots,g_\ell$ in $\mc{U}$. We then for each $n\in\mb{N}^+$ have
\[
A_1T_1^{n}f=\lambda_{1,n}g_1+\ldots+\lambda_{\ell,n}g_\ell+r_n
\]
for appropriate coefficients $\lambda_{j,n}\in\mb{C}$ and some remainder term $r_n\in \mc{R}$ with $\|r_n\|_\infty<\varepsilon/C^{m-2}$. There exist linear forms $\varphi_1,\ldots, \varphi_\ell\in E'$ such that 
$$
\varphi_j(g_i)=\delta_{i,j}\quad \mbox{and}\quad \varphi_j|_{\mc{R}}=0\quad  \mbox{ for every } i,j\in\{1,\ldots,\ell\}.
$$
We then have
\[
\lambda_{j,n}= \varphi_j( A_1T_1^nf)= (A_1^*\varphi_j)(T_1^nf),
\]
therefore 
$$
\left|\lambda_{j,n}\right|\leq \|f\| \cdot\|A_1^*\| \max_{j\in\{1,\ldots,\ell\}}\| \varphi_j\|=:c
$$ 
and, if $f\in E_{1,s}$,  $(\lambda_{j,n})_{n\in\mb{N}^+}\in\ms{N}$. Note that $c$ depends on $\varepsilon$.

With this splitting we have that
\begin{eqnarray*}
&&\frac{1}{N^k}\sum_{1\leq n_1,\ldots, n_k\leq N} T_m^{n_{\alpha(m)}}A_{m-1}T^{n_{\alpha(m-1)}}_{m-1}\ldots A_2T_2^{n_{\alpha(2)}}A_1T_1^{n_{\alpha(1)}} f\\
&=&\frac{1}{N^k}\sum_{1\leq n_1,\ldots, n_k\leq N} T_m^{n_{\alpha(m)}}A_{m-1}T^{n_{\alpha(m-1)}}_{m-1}\ldots A_2T_2^{n_{\alpha(2)}} r_{n_{\alpha(1)}}\\
&&+\sum_{j=1}^\ell \frac{1}{N^k}\sum_{1\leq n_1,\ldots, n_k\leq N} T_m^{n_{\alpha(m)}}A_{m-1}T^{n_{\alpha(m-1)}}_{m-1}\ldots A_2T_2^{n_{\alpha(2)}} \lambda_{j,n_{\alpha(1)}}g_j,
\end{eqnarray*}
and we shall investigate the multi-Cesàro convergence of each term separately.

Since $r_{n_{\alpha(1)}}\in L^\infty$, using condition (A2) and the fact that $T_m$ is an $L^\infty$-isometry we have for the first term the inequality
\begin{eqnarray*}
&& \frac{1}{N^k}\sum_{1\leq n_1,\ldots, n_k\leq N} \left|T_m^{n_{\alpha(m)}}A_{m-1}T^{n_{\alpha(m-1)}}_{m-1}\ldots A_2T_2^{n_{\alpha(2)}} r_{n_{\alpha(1)}}\right|(x)\\
&\leq& C^{m-2}
\frac{1}{N}\sum_{n_{\alpha(1)=1}}^N
\|r_{n_{\alpha(1)}}\|_\infty<\varepsilon
\end{eqnarray*}
for all $x$ from a set $R\subset X$ with $\mu(R)=1$.\\
It remains to show that the second term also converges in the required sense.

\noindent We first turn our attention to part (1), and assume that $f\in E_{1,s}$.
Recall that we then have for each $1\leq j\leq \ell$ that $(\lambda_{j,n})_{n\in\mb{N}^+}\in\ms{N}$.
Let us fix $1\leq j\leq \ell$, and consider the term 
\[
\frac{1}{N^k}\sum_{1\leq n_1,\ldots, n_k\leq N} T_m^{n_{\alpha(m)}}A_{m-1}T^{n_{\alpha(m-1)}}_{m-1}\ldots A_2T_2^{n_{\alpha(2)}} \lambda_{j,n_{\alpha(1)}}g_j.
\]

Assume first that $m=2$. In this case we choose a function $\widetilde{g}_j\in L^\infty$ such that $\|g_j-\widetilde{g}_j\|_1\leq\|g_j-\widetilde{g}_j\|_p<\varepsilon/c\ell$.
Then
\begin{eqnarray*}
&&
\frac{1}{N^k}\sum_{1\leq n_1,\ldots, n_k\leq N} |T_2^{n_{\alpha(2)}}\lambda_{j,n_{\alpha(1)}} g_j|\\
&\leq&
\frac{1}{N^k}\sum_{1\leq n_1,\ldots, n_k\leq N} |T_2^{n_{\alpha(2)}}\lambda_{j,n_{\alpha(1)}} (g_j-\widetilde{g}_j)|
+
\frac{1}{N^k}\sum_{1\leq n_1,\ldots, n_k\leq N} |T_2^{n_{\alpha(2)}}\lambda_{j,n_{\alpha(1)}} \widetilde{g}_j|.
\end{eqnarray*}

Since $(\lambda_{j,n})_{n\in\mb{N}^+}\in\ms{N}$, the second term satisfies by (A2)
\[
\frac{1}{N^k}\sum_{1\leq n_1,\ldots, n_k\leq N} |T_2^{n_{\alpha(2)}}\lambda_{j,n_{\alpha(1)}} \widetilde{g}_j|\leq 	\|\widetilde{g}_j\|_\infty\cdot\frac{1}{N}\sum_{n_{\alpha(1)}=1}^N |\lambda_{j,n_{\alpha(1)}}|\to 0
\]
for all $x$ from a set $P_j\subset R$ with $\mu(P_j)=1$. Set $Q:=\cap_{j=1}^\ell P_j$.

It now remains to treat the first term, 
\[
\frac{1}{N^k}\sum_{1\leq n_1,\ldots, n_k\leq N} |T_2^{n_{\alpha(2)}}\lambda_{j,n_{\alpha(1)}} (g_j-\widetilde{g}_j)|.
\]
Applying Birkhoff's pointwise ergodic theorem to the operator $T_2$ and the function $|(g_j-\widetilde{g}_j)|$, there exists a set $S_{j,\varepsilon}\subset Q$ with $\mu(S_{j,\varepsilon})=1$ such that
\[
\lim_{N\to\infty}\frac{1}{N} \sum_{n_{\alpha(2)}=1}^N T_2^{n_{\alpha(2)}} |(g_j-\widetilde{g}_j)|=\|(g_j-\widetilde{g}_j)\|_1.
\]
Since the sequence $(\lambda_{j,n})_{n\in\mb{N}^+}$ is bounded in absolute value by $c$, we thus obtain
\begin{eqnarray*}
&&\overline{\lim_{N\to\infty}}
\frac{1}{N^k}\sum_{1\leq n_1,\ldots, n_k\leq N} |T_2^{n_{\alpha(2)}}\lambda_{j,n_{\alpha(1)}} (g_j-\widetilde{g}_j)|(x)\\
&\leq&
c\cdot\overline{\lim_{N\to\infty}}
\frac{1}{N^k}\sum_{1\leq n_1,\ldots, n_k\leq N} |T_2^{n_{\alpha(2)}}(g_j-\widetilde{g}_j)|(x)
=c\|(g_j-\widetilde{g}_j)\|_1<\varepsilon/\ell
\end{eqnarray*}
for each $x\in S_{j,\varepsilon}$.

Summing over all $1\leq j\leq \ell$, we have for each $x\in\cap_{j=1}^\ell S_{j,\varepsilon}=:S_\varepsilon$ that

\begin{eqnarray*}
&&
\overline{\lim_{N\to\infty}}
\frac{1}{N^k}\sum_{1\leq n_1,\ldots, n_k\leq N} |T_2^{n_{\alpha(2)}}A_1T_1^{n_{\alpha(1)}} f|\\
&\leq&\overline{\lim_{N\to\infty}}\frac{1}{N^k}\sum_{1\leq n_1,\ldots, n_k\leq N} |A_2T_2^{n_{\alpha(2)}} r_{n_{\alpha(1)}}|(x)\\
&&+\sum_{j=1}^\ell \overline{\lim_{N\to\infty}}
\frac{1}{N^k}\sum_{1\leq n_1,\ldots, n_k\leq N} |T_2^{n_{\alpha(2)}} \lambda_{j,n_{\alpha(1)}}\widetilde{g}_j|(x)\\
&&+\sum_{j=1}^\ell \overline{\lim_{N\to\infty}}
\frac{1}{N^k}\sum_{1\leq n_1,\ldots, n_k\leq N} |T_2^{n_{\alpha(2)}} \lambda_{j,n_{\alpha(1)}}(g_j-\widetilde{g}_j)|(x)
\\
&<&\varepsilon+0+\ell\cdot\varepsilon/\ell=2\varepsilon.
\end{eqnarray*}

But $\mu(S_\varepsilon)=1$, and hence we are done (with the case m=2 of part (1)).

Now assume $m>2$ and fix $1\leq j\leq \ell$.
Again by assumption (A1), there exists a further finite dimensional subspace $\mc{U}_j=\mc{U}(g_j,\varepsilon/\ell C^{m-3})\subset E$ and a corresponding decomposition $E=\mc{U}_j\oplus\mc{R}_j$ such that $P_{\mc{R}_j}\ms{A}_{2,g_j}\subset B_{\varepsilon/\ell C^{m-3}}(0,L^\infty(X,\mu))$. Choose a maximal linearly independent set $g_{1,j},g_{2,j},\ldots,g_{\ell_j,j}$ in $\mc{U}_j$ and let\\ $\varphi_{1,j},\ldots, \varphi_{\ell_j,j}\in E'$ have the property
$$
\varphi_{i,j}(g_{l,j})=\delta_{i,l}\quad \mbox{and}\quad \varphi_{i,j}|_{\mc{R}_j}=0\quad  \mbox{ for every } i,l\in\{1,\ldots,\ell_j\}.
$$
Then we may for each $n\in\mb{N}^+$ write
\[
A_2T_2^n g_j=\lambda_{1,j,n}g_{1,j}+\ldots+\lambda_{\ell_j,j,n}g_{\ell_j,j}+r_{j,n}
\]
for appropriate coefficients $\lambda_{i,j,n}\in\mb{C}$ ($1\leq i\leq \ell_j$) and remainder term $r_{j,n}\in \mc{R}_j$ with $\|r_{j,n}\|_\infty<\varepsilon/\ell C^{m-3}$, and we have
\[
\lambda_{i,j,n}= \varphi_{i,j}( A_2T_2^n g_j)=(A_2^*\varphi_{i,j})(T_2^n g_j).
\]
It follows that 
$$
\left|\lambda_{i,j,n}\right|\leq \|g_j\|\cdot\|A_2^*\| \cdot\max_{i\in\{1,\ldots,\ell_j\}}\|\varphi_{i,j}\|=:c_j.
$$

Since $\|r_{j,n_{\alpha(2)}}\|_\infty<\varepsilon/\ell C^{m-3}$, using condition (A2) and the fact that $T_m$ is an $L^\infty$-isometry we have for the contribution of the $r_{j,n}$ terms that
\begin{eqnarray*}
&& \frac{1}{N^k}\sum_{1\leq n_1,\ldots, n_k\leq N} \left|T_m^{n_{\alpha(m)}}A_{m-1}T^{n_{\alpha(m-1)}}_{m-1}\ldots A_3T_3^{n_{\alpha(3)}} \lambda_{j,n_{\alpha(1)}} r_{j,n_{\alpha(2)}}\right|(x)\\
&\leq& C^{m-3}
\frac{1}{N^k}\sum_{1\leq n_1,\ldots, n_k\leq N}
\|\lambda_{j,n_{\alpha(1)}} r_{n_{\alpha(2)}}\|_\infty<\frac{\varepsilon}{\ell} \sum_{n=1}^N |\lambda_{j,n}|
\end{eqnarray*}
for all $x$ from some set $R_j\subset R$ with $\mu(R_j)=1$. But since $(\lambda_{j,n})_{n\in\mb{N}^+}\in\mc{N}$, the Cesàro limit of $(|\lambda_{j,n}|)_{n\in\mb{N}}$ is equal to zero, so we in fact have
\[
\lim_{N\to\infty}
 \frac{1}{N^k}\sum_{1\leq n_1,\ldots, n_k\leq N} \left|T_m^{n_{\alpha(m)}}A_{m-1}T^{n_{\alpha(m-1)}}_{m-1}\ldots A_3T_3^{n_{\alpha(3)}} \lambda_{j,n_{\alpha(1)}} r_{j,n_{\alpha(2)}}\right|(x)
=0
\]
for all $x\in R_j$.

We now turn our attention to the term
\[
\frac{1}{N^k}\sum_{1\leq n_1,\ldots, n_k\leq N} \left| T_m^{n_{\alpha(m)}}A_{m-1}T^{n_{\alpha(m-1)}}_{m-1}\ldots A_3T_3^{n_{\alpha(3)}} \lambda_{j,n_{\alpha(1)}} \lambda_{i,j,n_{\alpha(2)}} g_{i,j}\right|.
\]
If $m=3$, then as above for the case $m=2$, we split each $g_{i,j}$ ($1\leq i\leq \ell_j$) into $\widetilde{g}_{i,j}\in L^\infty$ and the remainder $g_{i,j}-\widetilde{g}_{i,j}$ such that
$\|g_{i,j}-\widetilde{g}_{i,j}\|_1\leq \|g_{i,j}-\widetilde{g}_{i,j}\|_p\leq\varepsilon/cc_j\ell\ell_j$.
Then
\begin{eqnarray*}
&& \frac{1}{N^k}\sum_{1\leq n_1,\ldots, n_k\leq N} \left|T_3^{n_{\alpha(3)}} \lambda_{j,n_{\alpha(1)}} \lambda_{i,j,n_{\alpha(2)}} g_{i,j}\right|(x)\\
&\leq&
 \frac{1}{N^k}\sum_{1\leq n_1,\ldots, n_k\leq N} \left|T_3^{n_{\alpha(3)}} \lambda_{j,n_{\alpha(1)}} \lambda_{i,j,n_{\alpha(2)}} \widetilde{g}_{i,j}\right|(x)\\
&&+ \frac{1}{N^k}\sum_{1\leq n_1,\ldots, n_k\leq N} \left|T_3^{n_{\alpha(3)}} \lambda_{j,n_{\alpha(1)}} \lambda_{i,j,n_{\alpha(2)}} (g_{i,j}-\widetilde{g}_{i,j})\right|(x).
\end{eqnarray*}

Since $(\lambda_{j,n})_{n\in\mb{N}^+}\in\ms{N}$,  and $(|\lambda_{i,j,n}|)_{n\in\mb{N}^+}$ is bounded by $c_j$, the first term satisfies by (A2)
\[
\frac{1}{N^k}\sum_{1\leq n_1,\ldots, n_k\leq N} \left|T_3^{n_{\alpha(3)}} \lambda_{j,n_{\alpha(1)}} \lambda_{i,j,n_{\alpha(2)}} \widetilde{g}_{i,j}\right|(x)
\leq
c_j\|\widetilde{g}_{i,j}\|_\infty\cdot\frac{1}{N}\sum_{n_{\alpha(1)}=1}^N |\lambda_{j,n_{\alpha(1)}}|\to 0
\]
for all $x$ from a set $P_{i,j}\subset R_j$ with $\mu(P_{i,j})=1$. Set $Q_j:=\cap_{i=1}^{\ell_j} P_{i,j}$.

Applying Birkhoff's pointwise ergodic theorem this time to the operator $T_3$ and the function $|(g_{i,j}-\widetilde{g}_{i,j})|$, there exists a set $S_{i,j,\varepsilon}\subset Q_j$ with $\mu(S_{i,j,\varepsilon})=1$ such that
\[
\lim_{N\to\infty}\frac{1}{N} \sum_{n_{\alpha(3)}=1}^N T_3^{n_{\alpha(3)}} |(g_{i,j}-\widetilde{g}_{i,j})|=\|(g_{i,j}-\widetilde{g}_{i,j})\|_1.
\]
Since the sequence $(\lambda_{j,n})_{n\in\mb{N}^+}$ is bounded in absolute value by $c$ and $(|\lambda_{i,j,n}|)_{n\in\mb{N}^+}$ is bounded by $c_j$, we thus obtain
\begin{eqnarray*}
&&\overline{\lim_{N\to\infty}}
\frac{1}{N^k}\sum_{1\leq n_1,\ldots, n_k\leq N} \left|T_3^{n_{\alpha(3)}} \lambda_{j,n_{\alpha(1)}} \lambda_{i,j,n_{\alpha(2)}} (g_{i,j}-\widetilde{g}_{i,j})\right|(x)\\
&\leq&
cc_j\cdot\overline{\lim_{N\to\infty}}
\frac{1}{N^k}\sum_{1\leq n_1,\ldots, n_k\leq N} |T_3^{n_{\alpha(3)}}(g_{i,j}-\widetilde{g}_{i,j})|(x)
=cc_j\|(g_j-\widetilde{g}_j)\|_1<\varepsilon/\ell\ell_j
\end{eqnarray*}
for each $x\in S_{i,j,\varepsilon}$.

Summing over all $1\leq i\leq \ell_j$ and then $1\leq j\leq\ell$ we therefore obtain
\begin{eqnarray*}
&&
\overline{\lim_{N\to\infty}}
\frac{1}{N^k}\sum_{1\leq n_1,\ldots, n_k\leq N} |T_3^{n_{\alpha(3)}}A_2T_2^{n_{\alpha(2)}}A_1T_1^{n_{\alpha(1)}} f|(x)\\
&\leq&\overline{\lim_{N\to\infty}}\frac{1}{N^k}\sum_{1\leq n_1,\ldots, n_k\leq N} |T_3^{n_{\alpha(3)}}A_2T_2^{n_{\alpha(2)}} r_{n_{\alpha(1)}}|(x)\\
&&+\sum_{j=1}^\ell \overline{\lim_{N\to\infty}}
\frac{1}{N^k}\sum_{1\leq n_1,\ldots, n_k\leq N} |T_3^{n_{\alpha(3)}} \lambda_{j,n_{\alpha(1)}} r_{j,n_{\alpha(2)}}|(x)\\
&&+\sum_{j=1}^\ell \sum_{i=1}^{\ell_j} \overline{\lim_{N\to\infty}}
\frac{1}{N^k}\sum_{1\leq n_1,\ldots, n_k\leq N} |T_3^{n_{\alpha(3)}} \lambda_{j,n_{\alpha(1)}}\lambda_{i,j,n_{\alpha(2)}} \widetilde{g}_{i,j}|(x)\\
&&+\sum_{j=1}^\ell \sum_{i=1}^{\ell_j} \overline{\lim_{N\to\infty}}
\frac{1}{N^k}\sum_{1\leq n_1,\ldots, n_k\leq N} |T_3^{n_{\alpha(3)}} \lambda_{j,n_{\alpha(1)}}\lambda_{i,j,n_{\alpha(2)}}(g_{i,j}-\widetilde{g}_{i,j})|(x)
\\
&<&\varepsilon+0+0+\sum_{j=1}^\ell \ell_j\cdot\varepsilon/\ell\ell_j=2\varepsilon.
\end{eqnarray*}
for all $x\in\cap_{j=1}^\ell\cap_{i=1}^{\ell_j} S_{i,j,\varepsilon}=:S_\varepsilon$, where $\mu(S_\varepsilon)=1$.\\
This concludes the case $m=3$ of part (1).
For general values of $m>3$, we follow the same procedure.\\
Fix $\varepsilon>0$ and $f\in E_{1,s}$, and until and including reaching $A_{m-1}T_{m-1}^{n_{\alpha(m-1)}}$, we split the current functions $A_iT_i^ng_*$ according to property (A1) into a remainder term $r_{*,n}$ and a finite linear combination of functions $g_{b,*}$, with the new coefficients $\lambda_{b,*,n}$ forming a bounded sequence.\\
Since each $(\lambda_{j,n})_{n\in\mb{N}^+}$ lies in $\mc{N}$, all remainder terms beyond the $r_n$'s will have a zero contribution to the pointwise multi-Cesàro means (cf. the contribution of the $r_{j,n}$'s in the case $m=3$). The averaging out of the terms with $r_n$ will yield a contribution to the limes superior of at most $\varepsilon$.\\
Once we have reached the stage $T_m^{n_{\alpha(m)}}g_{b,*}$, we split each $g_{b,*}$ further into a bounded function $\widetilde{g}_{b,*}$, and a term $(g_{b,*}-\widetilde{g}_{b,*})$ with very small $L^1$ norm.\\
Again thanks to $(\lambda_{j,n})_{n\in\mb{N}^+}\in\mc{N}$ and all other $\lambda_{*,n}$ sequences being bounded, the former terms will have a zero contribution, whilst the latter terms will, due to Birkhoff's pointwise ergodic theorem, contribute to the limes superior with a total less than $\varepsilon$. Note that this is possible because in each step the number of functions we split into is only dependent on the current functions to be split and the value of $\varepsilon$ (cf. the details of the case $m=3$).

\vspace{0.1cm}

For part (2), assume $p=2$ and note that 
eigenfunctions in $E_{1,r}$ pertaining to different eigenvalues are always orthogonal. Let so $f\in E_{1,r}$ be fixed, and let $\left\{h_v\right\}_{v\in V}$ be an orthonormal basis in $E_{1,r}$ of eigenvectors pertaining to unimodular eigenvalues $\left\{\beta_v\right\}_{v\in V}$. Note that $V$ is a countable set. Then we can write $f=\sum_{v\in V} d_v h_v$, for some $\ell^2$-sequence $(d_v)$, and we have
\begin{eqnarray*}
\lambda_{j,n}
 =\langle T_1^nf,A_1^*\varphi_j\rangle=\big\langle\sum_{v\in V}\beta_v^nd_v h_v,A_1^*\varphi_j\big\rangle=\sum_{v\in V}\beta_v^n \left(d_v\langle h_v,A_1^*\varphi_j\rangle\right),
\end{eqnarray*}
and so for each $1\leq j\leq \ell$ we have that 
$(\lambda_{j,n})_n\in\ms{P}$ 
since $\left(d_v\langle h_v,A_1^*\varphi_j\rangle\right)\in l^1$ by the Cauchy-Schwarz inequality. 

For each $1\leq j\leq \ell$, we may split $g_j$ into a stable and a reversible part with respect to $T_2$, i.e. $g_j=g_j^s+g_j^r$ with $g_j^s\in E_{2,s}$ and $g_j^r\in E_{2,r}$. Then we have
\begin{eqnarray*}
&&\sum_{j=1}^\ell T_m^{n_{\alpha(m)}}A_{m-1}T^{n_{\alpha(m-1)}}_{m-1}\ldots A_2T_2^{n_{\alpha(2)}} \lambda_{j,n_{\alpha(1)}}g_j\\
&=&\sum_{j=1}^\ell T_m^{n_{\alpha(m)}}A_{m-1}T^{n_{\alpha(m-1)}}_{m-1}\ldots A_2T_2^{n_{\alpha(2)}} \lambda_{j,n_{\alpha(1)}}g_j^r\\
&&+\sum_{j=1}^\ell T_m^{n_{\alpha(m)}}A_{m-1}T^{n_{\alpha(m-1)}}_{m-1}\ldots A_2T_2^{n_{\alpha(2)}} \lambda_{j,n_{\alpha(1)}}g_j^s.
\end{eqnarray*}
To complete the proof of part (2), it remains to be shown that the multi-Cesàro means of the Left Hand Side converges pointwise for almost all $x\in R$. To that end, we shall show that each of the two sums on the Right Hand Side have this property.

Indeed, for the second sum, using part (1) applied to $(m-1)$ pairs $A_iT_i^n$, we obtain that
\begin{eqnarray*}
&&\left|
\frac{1}{N^k}\sum_{1\leq n_1,\ldots, n_k\leq N} \sum_{j=1}^\ell T_m^{n_{\alpha(m)}}A_{m-1}T^{n_{\alpha(m-1)}}_{m-1}\ldots A_2T_2^{n_{\alpha(2)}} \lambda_{j,n_{\alpha(1)}}g_j^s
\right|(x)\\
&\leq&
\frac{1}{N^k}\sum_{1\leq n_1,\ldots, n_k\leq N} \sum_{j=1}^{\ell}
\left|
T_m^{n_{\alpha(m)}}A_{m-1}T^{n_{\alpha(m-1)}}_{m-1}\ldots A_2T_2^{n_{\alpha(2)}} \lambda_{j,n_{\alpha(1)}}g_j^s
\right|(x)\\
&\leq&
\frac{1}{N^k}\sum_{1\leq n_1,\ldots, n_k\leq N} \sum_{j=1}^{\ell}
c \left|
T_m^{n_{\alpha(m)}}A_{m-1}T^{n_{\alpha(m-1)}}_{m-1}\ldots A_2T_2^{n_{\alpha(2)}} g_j^s
\right|(x)\\
&=&
c\cdot \sum_{j=1}^{\ell}
\left(
\frac{1}{N^k}\sum_{1\leq n_1,\ldots, n_k\leq N}
\left|
T_m^{n_{\alpha(m)}}A_{m-1}T^{n_{\alpha(m-1)}}_{m-1}\ldots A_2T_2^{n_{\alpha(2)}} g_j^s
\right|(x)
\right)\to 0
\end{eqnarray*}
for all $x$ in a set $S\subset R$ with $\mu(S)=1$.

Now let us turn our attention to the first sum, involving the reversible parts $g_j^r$.\\
In case $m=2$, this is in fact of the simple form
\begin{equation*}
\frac{1}{N^k}\sum_{1\leq n_1,\ldots, n_k\leq N} \sum_{j=1}^\ell T_2^{n_{\alpha(2)}} \lambda_{j,n_{\alpha(1)}}g_j^r (x)
=\sum_{j=1}^\ell \frac{1}{N^k}\sum_{1\leq n_1,\ldots, n_k\leq N} \lambda_{j,n_{\alpha(1)}} T_2^{n_{\alpha(2)}} g_j^r (x).
\end{equation*}
Recall that the sequences $(\lambda_{j,n})_{n\in\mb{N}^+}$ are almost periodic, hence on the one hand are good weights for the pointwise ergodic theorem, but also converge in the Cesàro sense.
Using the former property in case $\alpha(1)=\alpha(2)$ and the latter otherwise, we therefore obtain that the above expression converges for almost all $x\in S$.
This concludes the case $m=2$ for part (2).\\
Now assume $m\geq 3$, and use property (A1) applied to $A_2,T_2$ to obtain the following.
For each $1\leq j\leq \ell$ there exists a further finite dimensional subspace
\[
\mc{U}_j=\mc{U}(g_j^r,\varepsilon/cC^{m-2})\subset E
\]
and a corresponding decomposition $E=\mc{U}_j\oplus\mc{R}_j$
such that
\[
P_{\mc{R}_j}\ms{A}_{2,g_j^r}\subset B_{\varepsilon/cC^{m-2}}(0,L^\infty(X,\mu)).
\]
 Let $g_{1,j},g_{2,j},\ldots,g_{\ell_j,j}$ be an orthonormal basis in $\mc{U}_j$. Then we may for each $n\in\mb{N}^+$ write
\[
A_2T_2^n g_j^r=\lambda_{1,j,n}g_{1,j}+\ldots+\lambda_{\ell_j,j,n}g_{\ell_j,j}+r_{j,n}
\]
for appropriate coefficients $\lambda_{i,j,n}\in\mb{C}$ ($1\leq i\leq \ell_j$) and remainder term $r_{j,n}\in \mc{R}_j$ with $\|r_{j,n}\|_\infty<\varepsilon/C^{a-2}$, and we have
\[
\lambda_{i,j,n}= \langle A_2T_2^n g_j^r,\varphi_{i,j}\rangle=\langle T_2^n g_j^r,A_2^*\varphi_{i,j}\rangle,
\]
where each $\varphi_{i,j}$ is orthogonal to $\mc{R}_j$ and $\langle g_{l,j},\varphi_{i,j}\rangle=\delta_{l,i}$. 
(Note that in case $\mc{R}_j\perp\mc{U}_j$, we may choose 
$\varphi_{i,j}:=g_{i,j}$.)
Thus 
$$
\left|\lambda_{i,j,n}\right|\leq \|g_j^r\|_2\cdot\|A_2^*\|\max\{\|\varphi_{i,j}\|_2,\, i=1,\ldots,k_j\}=:c_j
$$
and also $(\lambda_{i,j,n})_{n\in\mb{N}^+}\in\ms{P}$ by Example \ref{ex:alm-per}. 

Therefore for each $1\leq j\leq \ell$ we have for almost every $x\in X$
\begin{align*}
&\left|\overline{\lim}_{N\to\infty}
\left(
\frac{1}{N^k}\sum_{1\leq n_1,\ldots, n_k\leq N} T_m^{n_{\alpha(m)}}A_{m-1}T^{n_{\alpha(m-1)}}_{m-1}\ldots A_2T_2^{n_{\alpha(2)}} \lambda_{j,n_{\alpha(1)}} g_j^r
\right)(x)
\right.
\\
&
-
\left.
\underline{\lim}_{N\to\infty}
\left(
\frac{1}{N^k}\sum_{1\leq n_1,\ldots, n_k\leq N} T_m^{n_{\alpha(m)}}A_{m-1}T^{n_{\alpha(m-1)}}_{m-1}\ldots A_2T_2^{n_{\alpha(2)}} \lambda_{j,n_{\alpha(1)}} g_j^r
\right)(x)
\right|\\
&\leq
\left|\overline{\lim}_{N\to\infty}
\left(
\frac{1}{N^k}\sum_{1\leq n_1,\ldots, n_k\leq N} T_m^{n_{\alpha(m)}}A_{m-1}T^{n_{\alpha(m-1)}}_{m-1}\ldots A_3T_3^{n_{\alpha(3)}} \lambda_{j,n_{\alpha(1)}} r_{j,n}
\right)(x)
\right.
\\
&
-
\left.
\underline{\lim}_{N\to\infty}
\left(
\frac{1}{N^k}\sum_{1\leq n_1,\ldots, n_k\leq N} T_m^{n_{\alpha(m)}}A_{m-1}T^{n_{\alpha(m-1)}}_{m-1}\ldots A_3T_3^{n_{\alpha(3)}} \lambda_{j,n_{\alpha(1)}} r_{j,n}
\right)(x)
\right|
\\
&+
\sum_{i=1}^{\ell_j}
\left|\overline{\lim}_{N\to\infty}
\left(
\frac{1}{N^k}\sum_{1\leq n_1,\ldots, n_k\leq N} T_m^{n_{\alpha(m)}}
\ldots A_3T_3^{n_{\alpha(3)}} \lambda_{j,n_{\alpha(1)}}\lambda_{i,j,n_{\alpha(2)}} g_{i,j}
\right)(x)
\right.
\\
&
-
\left.
\underline{\lim}_{N\to\infty}
\left(
\frac{1}{N^k}\sum_{1\leq n_1,\ldots, n_k\leq N} T_m^{n_{\alpha(m)}}
\ldots A_3T_3^{n_{\alpha(3)}} \lambda_{j,n_{\alpha(1)}}\lambda_{i,j,n_{\alpha(2)}} g_{i,j}
\right)(x)
\right|.
\end{align*}

Note that the first term on the Right Hand Side is bounded by
\[
2cC^{m-2}\|r_{j,n}\|_\infty<2\varepsilon
\]
for all $x$ from a set $R_j\subset S$ with $\mu(R_j)=1$, and to complete our proof we need to handle the terms of the sum.\\
Now if $m=3$, the sum simplifies to
\begin{eqnarray*}
\sum_{i=1}^{\ell_j}
&&\left|\overline{\lim}_{N\to\infty}
\left(
\frac{1}{N^k}\sum_{1\leq n_1,\ldots, n_k\leq N} T_3^{n_{\alpha(3)}} \lambda_{j,n_{\alpha(1)}}\lambda_{i,j,n_{\alpha(2)}} g_{i,j}
\right)(x)
\right.\\
&&-
\left.
\underline{\lim}_{N\to\infty}
\left(
\frac{1}{N^k}\sum_{1\leq n_1,\ldots, n_k\leq N} T_3^{n_{\alpha(3)}} \lambda_{j,n_{\alpha(1)}}\lambda_{i,j,n_{\alpha(2)}} g_{i,j}
\right)(x)
\right|
\end{eqnarray*}

The sequences $(\lambda_{i,j,n})_{n\in\mb{N}^+}$ and $(\lambda_{j,n})_{n\in\mb{N}^+}$ are almost periodic, hence on the one hand are good weights for the pointwise ergodic theorem, but also converge in the Cesàro sense, and the same is true for their product. In light of these properties, grouping them in a manner similar to what was done for the case $m=2$, we obtain that the above expression converges to $0$ for almost all $x\in R_j$.
This concludes the case $m=3$ for part (2).

In case $m>3$, the above splitting procedure can be extended in a natural way.
In each step, the functions $A_iT_i^ng_*$ are split using property (A1), the remainder terms $r_{*,n}$ giving rise to a total contribution of at most $2\varepsilon$ to the difference of the limsup and liminf. Given that the number of steps is $m-1$, this is a contribution over all steps of $\leq 2(m-1)\varepsilon$.\\
Then each $g_{j,*}$ is further split into its stable and its reversible part with respect to $T_{i+1}$.\\
The stable parts $g_{j,*}^s$ have a contribution of $0$ by part (1) of this theorem, proven above. The splitting is then repeated for the reversible parts $g_{j,*}^r$ until we reach $T_m$. Once at that stage, we group the coefficients $\lambda_{**,n_{\alpha(i)}}$ according to the value of $\alpha(i)$. Note that each sequence of $\lambda$'s is almost periodic, hence any product is as well. For each $b\neq\alpha(m)$ the corresponding product $\prod_{\alpha(i)=b}\lambda_{**,n_{\alpha(i)}}$ will in Cesàro average converge to some limit. For $b=\alpha(m)$ however, the corresponding product is the coefficient sequence of $T^{n_b}g_*$, and here we use the fact that almost periodic sequences are good weights for the pointwise ergodic theorem to obtain convergence (cf. the cases $m=2,3$).\\
In total, the difference of the limsup and the liminf will not exceed $2(m-1)\varepsilon$ for all $x$ from a set of full measure, concluding the proof.

\begin{Rem}
The pointwise limit is -- if it exists -- clearly the same as the stong limit, and takes the form given in \cite[Thm. 3]{EKK}.
\end{Rem}

%
%
\section{Weakly mixing transformations}

In this section we wish to obtain a polynomial version of Theorem \ref{thm:main}, as well as trying to say something about the reversible part for the general $L^p$ case ($1\leq p<\infty$). To this end, we shall need stronger assumptions on the transformations $T_j$.
Ergodicity will not suffice for our purpose, instead we shall require weak mixing.

\begin{Def}[cf. \cite{furst:book} Def. 4.1]
A measure preserving transformation $T$ on a measure 
space $(X,\mu)$ is weakly mixing if $T \times T$ is an ergodic transformation on
$(X \times X, \mu\times\mu)$. 
\end{Def}

The following lemma shows how this class of transformations becomes relevant for us.

\begin{Le}\label{le:weak_mix}
Let $T$ be a weakly mixing measure preserving transformation on a standard probability space $(X,\mu)$, $1<p<\infty$, and $E=L^p(X,\mu)$. Then the reversible part $E_r$ of the Jacobs-Glicksberg-deLeeuw decomposition $E=E_r\oplus E_s$ corresponding to $T$ is one-dimensional and generated by the constant 1 function~$\mathds{1}$.
\end{Le}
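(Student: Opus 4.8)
The plan is to show that any eigenfunction of $T$ on $E=L^p(X,\mu)$ with unimodular eigenvalue must be constant a.e., which immediately forces $E_r=\overline{\lin}\{\mathds{1}\}=\mb{C}\mathds{1}$ (the constant function is always fixed by the Koopman operator of a measure preserving transformation, so $E_r$ is at least one-dimensional). So suppose $g\in L^p(X,\mu)$ satisfies $Tg=\lambda g$ with $|\lambda|=1$; I want to conclude $g$ is constant a.e.

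The key step is to pass to the product system. First I would reduce to the case $p=2$: since $|Tg|=|g\circ T|=|g|$ and $\int |g\circ T|^p\,d\mu = \int |g|^p\,d\mu$, the function $|g|$ is $T$-invariant, hence constant a.e.\ by ergodicity of $T$ (which follows from weak mixing); normalizing, we may assume $|g|=1$ a.e., so in particular $g\in L^2(X,\mu)\cap L^\infty(X,\mu)$ regardless of the original $p$. Now consider the function $G(x,y):=g(x)\overline{g(y)}$ on $(X\times X,\mu\times\mu)$. Since $|g|=1$, $G$ is bounded, hence in $L^1(X\times X)$ (indeed $L^\infty$), and
\[
G\bigl((T\times T)(x,y)\bigr)=g(Tx)\overline{g(Ty)}=\lambda g(x)\overline{\lambda g(y)}=|\lambda|^2 g(x)\overline{g(y)}=G(x,y),
\]
so $G$ is $(T\times T)$-invariant. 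By the hypothesis that $T$ is weakly mixing, $T\times T$ is ergodic on $(X\times X,\mu\times\mu)$, so $G$ is constant $\mu\times\mu$-a.e., say $G\equiv c$. Evaluating (after integrating in $y$, or using Fubini) gives $g(x)\cdot\overline{\int g\,d\mu}=c$ for a.e.\ $x$; since $|g|=1$ the constant $\int g\,d\mu$ is nonzero (in fact $c=|\int g\,d\mu|^2\neq 0$ because $G$ is not a.e.\ zero), and therefore $g$ is constant a.e. This proves every unimodular eigenfunction is a scalar multiple of $\mathds{1}$, whence $E_r$ is one-dimensional and spanned by $\mathds{1}$.

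I expect the only delicate point to be the bookkeeping around integrability: one must make sure $G$ genuinely lies in the space on which ergodicity of $T\times T$ gives "invariant $\Rightarrow$ constant'' — this is why the preliminary reduction to $|g|=1$ (hence $g\in L^\infty$, $G\in L^\infty\subset L^1$) is worth carrying out explicitly rather than working directly in $L^p$, where $g(x)\overline{g(y)}$ might only be in $L^{p/2}$ for $p<2$. The use of weak mixing is essential and unavoidable here: mere ergodicity of $T$ permits nontrivial unimodular eigenfunctions (e.g.\ irrational rotations), and it is precisely the ergodicity of $T\times T$ that kills them. Everything else is routine: ergodicity of $T$ itself follows from weak mixing (as $T\times T$ ergodic implies $T$ ergodic, by testing with functions depending on one coordinate), and the final Fubini argument is elementary.
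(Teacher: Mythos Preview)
Your argument is correct and is precisely the classical proof that weak mixing forces every unimodular eigenfunction to be constant, via the $(T\times T)$-invariance of $g(x)\overline{g(y)}$. The paper does not spell this out but simply cites Furstenberg's book \cite[Thm.~4.30 and pp.~96--97]{furst:book}, where exactly this product-system argument (together with the reduction $|g|=\mathrm{const}$ by ergodicity) is carried out; so your approach coincides with the one the paper defers to.
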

\begin{proof}
The case $p=2$ follows from \cite[Thm. 4.30.]{furst:book}, the general case from the arguments presented before that theorem (\cite{furst:book} pp. 96--97).
\end{proof}
This spectral property of weakly mixing transformations simplifies the treatment of the reversible part. Also, as the next proposition shows, it guarantees that polynomial Cesàro means converge to an $L^\infty$ function.

\begin{Prop}[cf. \cite{Bourgain} Thm. 1]\label{prop:bourgain}
Let $T$ be an ergodic measure preserving transformation on a standard probability space $(X,\mu)$ and q(x) a polynomial with integer coefficients taking positive values on $\mb{N}^+$. Then for any $1<p<\infty$ and $f\in L^p(X,\mu)$ the limit
\[
\frac{1}{N} \sum_{n=1}^N T^{q(n)}f
\]
exists almost surely. If in addition $T$ is weakly mixing, the limit is given by the constant function $\int_X f\mr{d}\mu$.
\end{Prop}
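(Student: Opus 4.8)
The plan is to prove Proposition~\ref{prop:bourgain} by reducing it to the cited Bourgain-type pointwise polynomial ergodic theorem for the existence of the limit, and then identifying the limit in the weakly mixing case via a density argument combined with Lemma~\ref{le:weak_mix}. The first assertion, almost sure existence of $\frac{1}{N}\sum_{n=1}^N T^{q(n)}f$ for $f\in L^p(X,\mu)$, $1<p<\infty$, is precisely Bourgain's pointwise ergodic theorem along polynomial sequences (together with Wierdl-type extensions to $L^p$), so I would simply invoke \cite{Bourgain} Thm.~1 here; the only thing to check is that $q$ taking positive integer values on $\mb{N}^+$ puts us in the admissible range of that theorem, which it does.

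For the identification of the limit under the extra weak mixing hypothesis, the strategy is first to establish the claim for $f$ in a dense subclass and then pass to the limit using a maximal inequality. Concretely, I would first note that for $f=\mathds{1}$ the averages are identically $\mathds{1}=\int_X\mathds{1}\,\mr{d}\mu$, so the claimed formula is trivial there. Next, I would use Lemma~\ref{le:weak_mix}: since $T$ is weakly mixing, the Jacobs-Glicksberg-deLeeuw reversible part $E_r$ is spanned by $\mathds{1}$, so every $f\in L^p$ decomposes as $f=\left(\int_X f\,\mr{d}\mu\right)\mathds{1}+g$ with $g\in E_s$. By linearity it therefore suffices to show that $\frac{1}{N}\sum_{n=1}^N T^{q(n)}g\to 0$ a.e. for $g\in E_s$. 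For such $g$ one has $(\varphi(T^n g))\in\mc{N}$ for every $\varphi\in E'$; in particular, testing against $\mathds{1}\in L^{p'}$, the scalar sequence $b_n:=\int_X T^n g\,\mr{d}\mu = \int_X g\,\mr{d}\mu$ — wait, more usefully, one knows $\frac1N\sum_{n=1}^N|\langle T^n g,\psi\rangle|\to 0$ for all $\psi$, which gives weak convergence of the linear (non-polynomial) averages to $0$; but for the \emph{polynomial} averages the cleanest route is to use that the $L^2$ (or $L^p$) norm limit of $\frac1N\sum_{n=1}^N T^{q(n)}g$ is the projection onto the $T$-invariant functions, which by ergodicity is $\int_X g\,\mr{d}\mu=0$ for $g\in E_s$, hence the a.e.\ limit, which exists by the first part and agrees a.e.\ with any subsequential weak limit, must equal $0$ a.e.

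So the key steps in order are: (i) invoke Bourgain's theorem for a.e.\ existence; (ii) apply Lemma~\ref{le:weak_mix} to reduce, by linearity, to showing the limit is $0$ on the stable part $E_s$; (iii) identify the $L^p$-norm limit of the polynomial averages as the ergodic projection (Furstenberg's polynomial mean ergodic theorem, available in $L^p$ for $1<p<\infty$ by the same arguments cited on \cite{furst:book} pp.~96--97), which vanishes on $E_s$ by ergodicity; (iv) conclude that the a.e.\ limit, already known to exist and necessarily consistent with the norm limit, is $\int_X f\,\mr{d}\mu$. The main obstacle is step (iii)/(iv): one must be a little careful that the a.e.-existing limit from Bourgain's theorem genuinely coincides with the $L^p$-norm limit from the mean polynomial ergodic theorem rather than merely bounding it; this is handled by the uniform (maximal) bound furnishing uniform integrability along a suitable subsequence, or alternatively by first proving the identification on the dense set $E_s\cap L^\infty$ and then transferring via the maximal inequality underlying Bourgain's theorem. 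Everything else is routine.
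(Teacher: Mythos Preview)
The paper does not actually give a proof of this proposition: it is stated with the attribution ``cf.\ \cite{Bourgain} Thm.~1'' and then used as a black box, so there is nothing in the paper to compare against beyond that citation. Your proposal therefore goes further than the paper does, and the overall strategy --- invoke Bourgain for a.e.\ existence, then identify the limit in the weakly mixing case by matching the a.e.\ limit with the known $L^2$/$L^p$ norm limit from Furstenberg's polynomial mean ergodic theorem --- is correct and is the standard way to justify the second sentence of the proposition.

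Two small points of precision are worth tightening. First, your sentence ``the $L^p$-norm limit of the polynomial averages is the projection onto the $T$-invariant functions'' is not true for merely ergodic $T$ (rational eigenfunctions can survive along polynomial subsequences); it becomes true precisely under the weak mixing hypothesis you are using, so the justification should point to weak mixing, not to ergodicity alone. Second, the passage from ``a.e.\ limit exists'' and ``norm limit exists'' to ``they coincide a.e.'' is standard but should be stated cleanly: norm convergence yields a subsequence converging a.e.\ to the norm limit, which then must agree with the Bourgain a.e.\ limit. With these two clarifications your argument is complete; the somewhat meandering paragraph around $b_n$ and ``wait'' can simply be deleted.
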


Our last ingredient is the following proposition, which allows us to establish properties of the coefficient sequences $(\lambda_{*,n})$ along polynomial indices. Recall that an operator is almost weakly stable if the stable part of the Jacobs-Glicksberg-deLeeuw decomposition is the whole space.

\begin{Prop}[cf. \cite{KK_AWPS} Thm. 1.1]\label{prop:uaws_pol}
Let $T$ be an almost weakly stable contraction on a Hilbert space $H$. Then $T$ is almost weakly polynomial stable, i.e., for any $h\in H$ and non-constant polynomial $q$ with integer coefficients taking positive values on $\mb{N}^+$, the sequence $\{T^{q(j)}h\}_{j=1}^\infty$ is almost weakly stable.
\end{Prop}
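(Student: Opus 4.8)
The statement to prove is Proposition \ref{prop:uaws_pol}: that an almost weakly stable contraction $T$ on a Hilbert space $H$ is almost weakly polynomial stable. Since this is quoted as ``cf. \cite{KK_AWPS} Thm. 1.1,'' the proof presumably just cites that paper, but let me sketch how I'd actually prove it.

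My plan would be to use the spectral theorem. Since $T$ is a contraction on a Hilbert space, by the Sz.-Nagy dilation theorem, $T$ has a unitary dilation $U$ on a larger Hilbert space $K \supseteq H$, so that $T^n = P_H U^n|_H$ for all $n \geq 1$. Almost weak stability of $T$ is equivalent to the condition that $\frac{1}{N}\sum_{n=1}^N |\langle T^n h, g\rangle| \to 0$ for all $h,g \in H$, which by a standard argument (the van der Corput / spectral measure characterization) is equivalent to the spectral measure of $U$ restricted to $H$ having no atoms on the unit circle — more precisely, that the point spectrum of $U$ contributes nothing to vectors in $H$. Actually, the cleanest formulation: $T$ is almost weakly stable iff $T$ has no unimodular eigenvalues and, via the Jacobs-Glicksberg-deLeeuw theory quoted in the excerpt, $H = H_s$. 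The key spectral fact is that almost weak stability of $T$ is equivalent to: for the minimal unitary dilation $U$, the spectral measure $\mu_{h,h}$ on $\mathbb{T}$ is continuous (atomless) for every $h \in H$.

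The main step is then a Weyl equidistribution argument. For a non-constant polynomial $q$ with integer coefficients, I want to show $\frac{1}{N}\sum_{j=1}^N |\langle T^{q(j)} h, g\rangle| \to 0$. Writing $\langle U^{q(j)} h, g\rangle = \int_{\mathbb{T}} z^{q(j)}\, d\mu_{h,g}(z)$ and using the spectral measure, the averages $\frac{1}{N}\sum_{j=1}^N U^{q(j)}$ converge strongly, by Weyl's theorem on equidistribution of polynomial sequences $\{q(j)\theta\}$ modulo $1$, to the projection onto the subspace where $U$ acts by roots-of-unity eigenvalues appropriately — for irrational $\theta$ the averages $\frac{1}{N}\sum z^{q(j)} \to 0$, and for rational points one gets periodic behavior. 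Since $\mu_{h,h}$ is atomless, the rational points carry no mass, and one deduces that $\frac{1}{N}\sum_{j=1}^N |\langle U^{q(j)} h, g\rangle|^2$ or the absolute-value average tends to $0$; passing to $T$ via the dilation (noting $|\langle T^{q(j)}h,g\rangle| = |\langle U^{q(j)}h, g\rangle|$ since $g \in H$ and $T^n = P_H U^n|_H$) gives the claim. One needs the quantitative Weyl bound or a van der Corput lemma to handle the $|\cdot|$ (as opposed to the $|\cdot|^2$) average, but that is routine.

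The hard part — and the reason this is isolated as a cited proposition rather than proven inline — is making the Weyl equidistribution argument interact correctly with a general (atomless but otherwise arbitrary) spectral measure and, crucially, getting the Cesàro average of the \emph{absolute values} of the correlations to vanish rather than merely the average of squares; this typically requires a uniform (in the measure) van der Corput / Weyl-sum estimate, together with an approximation argument splitting $\mathbb{T}$ into a neighborhood of the (finitely many relevant) rational points of small $\mu$-measure and its complement. In our setting we simply invoke \cite{KK_AWPS}, Theorem 1.1, which establishes precisely this, so no further argument is needed here.
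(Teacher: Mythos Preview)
Your proposal is correct and matches the paper exactly: the paper provides no proof of this proposition at all, simply stating it as a citation of \cite{KK_AWPS}, Theorem~1.1, and you ultimately do the same. The dilation-plus-Weyl-equidistribution sketch you add is a reasonable outline of how such a result is proved, and you rightly flag that the delicate point is passing from Weyl's pointwise equidistribution to the Ces\`aro average of \emph{absolute values} against a general atomless spectral measure; since the paper defers this entirely to the cited reference, nothing more is required here.
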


Reformulating in the context needed in this paper we obtain the following.

\begin{Cor}\label{cor:awps}
Let $T$ be a weakly mixing measure preserving transformation on the standard probability space $(X,\mu)$, $q$ a non-constant polynomial with integer coefficients taking positive values on $\mb{N}^+$ and $A$ an arbitrary operator on $L^2(X,\mu)$.
Then for any $g,\varphi\in L^2(X,\mu)$ we have that the sequence $\langle AT^{q(n)}g,\varphi\rangle$ is bounded and lies in $\mc{N}$.
\end{Cor}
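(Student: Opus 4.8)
The plan is to dispose of boundedness by the obvious norm estimate and then to obtain membership in $\mc{N}$ by peeling off the fixed part of $g$ and invoking Proposition~\ref{prop:uaws_pol} on what remains. Boundedness is immediate: $T$ is an isometry of $L^2(X,\mu)$, so $|\langle AT^{q(n)}g,\varphi\rangle|\leq\|A\|\,\|g\|_2\,\|\varphi\|_2$ for every $n$.

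For the claim that $(\langle AT^{q(n)}g,\varphi\rangle)_n\in\mc{N}$, the first step I would take is to apply Lemma~\ref{le:weak_mix}: since $p=2$ and $T$ is weakly mixing, the reversible part of the Jacobs--Glicksberg--deLeeuw decomposition $L^2(X,\mu)=E_r\oplus E_s$ attached to $T$ is $E_r=\mb{C}\mathds{1}$, and in the Hilbert space $L^2(X,\mu)$ this splitting is orthogonal, so $E_s=\mathds{1}^{\perp}$. I would then write $g=\langle g,\mathds{1}\rangle\mathds{1}+h$ with $h\in E_s$ (note $\mathds{1}^{\perp}$ is $T$-invariant, as $T^{*}\mathds{1}=\mathds{1}$), and use $T\mathds{1}=\mathds{1}$ to write
\[
\langle AT^{q(n)}g,\varphi\rangle=\langle g,\mathds{1}\rangle\,\langle A\mathds{1},\varphi\rangle+\langle AT^{q(n)}h,\varphi\rangle .
\]
The first summand is constant in $n$ — it is the contribution of the reversible part, and it disappears in every application of the corollary, where the relevant $g$ already lies in $E_s$ — so the substance is to show $(\langle AT^{q(n)}h,\varphi\rangle)_n\in\mc{N}$.

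This is where Proposition~\ref{prop:uaws_pol} does the work. Since $E_s$ is a closed $T$-invariant subspace, $T|_{E_s}$ is a contraction on the Hilbert space $E_s$, and by the very definition of the stable part it is almost weakly stable. Applying Proposition~\ref{prop:uaws_pol} to $T|_{E_s}$, to $h$, and to the non-constant integer-coefficient polynomial $q$, I conclude that $\{T^{q(n)}h\}_n$ is almost weakly stable. As each $T^{q(n)}h$ again lies in $E_s$, for arbitrary $\psi\in L^2(X,\mu)$ one has $\langle T^{q(n)}h,\psi\rangle=\langle T^{q(n)}h,P_{E_s}\psi\rangle$ with $P_{E_s}$ the orthogonal projection onto $E_s$, so $(\langle T^{q(n)}h,\psi\rangle)_n\in\mc{N}$; taking $\psi=A^{*}\varphi$ and using $\langle AT^{q(n)}h,\varphi\rangle=\langle T^{q(n)}h,A^{*}\varphi\rangle$ finishes the argument.

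The one place needing genuine care, and what I expect to be the main (if modest) obstacle, is matching the vocabulary of Proposition~\ref{prop:uaws_pol} with that of Section~\ref{sec:prelim}: a bounded sequence $(x_n)$ in a Hilbert space is almost weakly stable exactly when $(\langle x_n,\psi\rangle)_n\in\mc{N}$ for every $\psi$ — equivalently, by the Koopman--von Neumann lemma, when $x_n\to 0$ weakly along a set of density one — so that the conclusion of Proposition~\ref{prop:uaws_pol} feeds directly into the definition of $\mc{N}$. With that identification in place, the corollary follows from Lemma~\ref{le:weak_mix} and Proposition~\ref{prop:uaws_pol}.
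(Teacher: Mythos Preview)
Your approach is precisely what the paper has in mind: the corollary is presented without proof as a direct reformulation of Proposition~\ref{prop:uaws_pol}, and the natural way to carry this out is to restrict $T$ to the stable part $E_s$ (where it is almost weakly stable) and then invoke the proposition, exactly as you do.

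You are also right to isolate the constant summand $\langle g,\mathds{1}\rangle\,\langle A\mathds{1},\varphi\rangle$. As literally stated, the corollary is not true for arbitrary $g\in L^2$: take $A=\mathrm{id}$ and $g=\varphi=\mathds{1}$, and the sequence is identically~$1$, which is not in $\mc{N}$. The paper only ever applies the corollary with $g$ lying in the stable part (in the proof of Theorem~\ref{thm:main_poly} the relevant vectors are the $T_1^{q(n)}f$ with $f\in E_{1,s}$), so the constant term is absent in every use. Your parenthetical remark is therefore the correct reading: the corollary should carry the hypothesis $g\in E_s$ (equivalently $\langle g,\mathds{1}\rangle=0$), and with that amendment your argument is complete and coincides with the paper's intended one.
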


With these tools in hand, we can now state and prove almost everywhere pointwise convergence of entangled means on Hilbert spaces.

\begin{Thm}\label{thm:main_poly}
Let $m>1$ and $k$ be positive integers, $\alpha:\left\{1,\ldots,m\right\}\to\left\{1,\ldots,k\right\}$ a not necessarily surjective map, and $T_1,T_2,\ldots, T_m$ weakly mixing measure preserving transformations on a standard probability space $(X,\mu)$.
Let $E:=L^2(X,\mu)$ and let $E=E_{j,r}\oplus E_{j,s}$ be the Jacobs-Glicksberg-deLeeuw decomposition corresponding to $T_j$ $(1\leq j\leq m)$. Let further $A_j\in\mc{L}(E)$ $(1\leq j< m)$ be bounded operators. Suppose that the the conditions (A1) and (A2) of Theorem \ref{thm:main} hold.\\
Further, let $q_1,q_2,\ldots, q_k$ be non-constant polynomials with integer coefficients taking positive values on $\mb{N}^+$.
Then we have the following:
\begin{enumerate}
\item for each $f\in E_{1,s}$, 
\[
\frac{1}{N^k}\sum_{1\leq n_1,\ldots, n_k\leq N} \left|T_m^{q_{\alpha(m)}(n_{\alpha(m)})}
\ldots A_2T_2^{q_{\alpha(2)}(n_{\alpha(2)})}A_1T_1^{q_{\alpha(1)}(n_{\alpha(1)})} f \right|\rightarrow 0
\]
pointwise a.e.;
\item for each $f\in E$, the averages 
\[
\frac{1}{N^k}\sum_{1\leq n_1,\ldots, n_k\leq N} T_m^{q_{\alpha(m)}(n_{\alpha(m)})}
\ldots A_2T_2^{q_{\alpha(2)}(n_{\alpha(2)})}A_1T_1^{q_{\alpha(1)}(n_{\alpha(1)})} \mathds{1}
\]
converge pointwise a.e. to the constant function
\[
\left(\langle f,\mathds{1}\rangle\prod_{i=1}^{m-1}\langle A_i\mathds{1},\mathds{1}\rangle\right)\cdot\mathds{1}.
\]
\end{enumerate}
\end{Thm}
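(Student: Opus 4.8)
The plan is to copy the proof of Theorem~\ref{thm:main}(1) with two ingredients replaced by their polynomial counterparts, and then derive part~(2) from part~(1). For part~(1), fix $\varepsilon>0$ and $f\in E_{1,s}$. Since $\{T_1^{q_{\alpha(1)}(n)}f:n\in\mb{N}^+\}\subset\{T_1^{m}f:m\in\mb{N}^+\}$, assumption (A1) at scale $\varepsilon/C^{m-1}$ produces the very same finite-dimensional $\mc{U}$ and dual functionals $\varphi_1,\dots,\varphi_\ell$ as in the integer-power case, giving
\[
A_1T_1^{q_{\alpha(1)}(n)}f=\sum_{j=1}^{\ell}\lambda_{j,n}g_j+r_n,\qquad \lambda_{j,n}=\langle A_1T_1^{q_{\alpha(1)}(n)}f,\varphi_j\rangle,\quad \|r_n\|_\infty<\varepsilon/C^{m-2}.
\]
Substitution one: because $T_1$ is weakly mixing and $f\in E_{1,s}$, Corollary~\ref{cor:awps} yields $(\lambda_{j,n})_n\in\mc{N}$ for every $j$, in place of the direct appeal to the definition of $E_{1,s}$ used for integer powers. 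From here the argument is verbatim that of Theorem~\ref{thm:main}(1): the $r_n$-term is absorbed using (A2) and the $L^\infty$-isometry property of $T_m^{q_{\alpha(m)}(\cdot)}$, contributing $\le\varepsilon$ to the $\limsup$; iterating (A1) through $T_2,\dots,T_{m-1}$, every later remainder is multiplied by the $\mc{N}$-sequence $(\lambda_{j,n})_n$ and drops out; and each surviving $g_{b,*}$ is split as $\widetilde g_{b,*}\in L^\infty$ (again killed by $(\lambda_{j,n})_n\in\mc{N}$) plus a term of small $L^1$-norm. Substitution two: for this last term I would invoke Bourgain's polynomial pointwise ergodic theorem (Proposition~\ref{prop:bourgain}, legitimate because $p=2$, so $g_{b,*}-\widetilde g_{b,*}\in L^2$) instead of Birkhoff's, to get $\frac1N\sum_nT_m^{q_{\alpha(m)}(n)}|g_{b,*}-\widetilde g_{b,*}|\to\|g_{b,*}-\widetilde g_{b,*}\|_1$ a.e. As in Theorem~\ref{thm:main} I would carry out $m=2$ and $m=3$ in full and then remark on the induction to $m>3$; summing the finitely many contributions bounds $\limsup$ by $2\varepsilon$ on a full-measure set, and $\varepsilon\downarrow0$ finishes part~(1).

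For part~(2) the plan is to reduce to the starting vector $\mathds{1}$ and then peel off the $A_i$'s one at a time. Since $T_1$ is weakly mixing, Lemma~\ref{le:weak_mix} gives $E_{1,r}=\mb{C}\mathds{1}$, hence $E_{1,s}=\mathds{1}^{\perp}$, so $f=\langle f,\mathds{1}\rangle\mathds{1}+f_s$ with $f_s\in E_{1,s}$; part~(1) kills the $f_s$-contribution a.e., so it suffices to treat $f=\mathds{1}$. As $T_1^{q_{\alpha(1)}(n)}\mathds{1}=\mathds{1}$, the average reduces to $\frac{1}{N^k}\sum[T_m^{q_{\alpha(m)}(\cdot)}A_{m-1}\cdots A_2T_2^{q_{\alpha(2)}(\cdot)}](A_1\mathds{1})$. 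Now iterate: having reached the function $A_i\mathds{1}$, put $c_i:=\langle A_i\mathds{1},\mathds{1}\rangle$ and write $A_i\mathds{1}=c_i\mathds{1}+h_i$ with $h_i\perp\mathds{1}$, i.e.\ $h_i\in E_{i+1,s}$. Since $T_{i+1}^{q_{\alpha(i+1)}(n)}\mathds{1}=\mathds{1}$, the $c_i\mathds{1}$-summand feeds the same type of average with $A_{i+1}\mathds{1}$ in place of $A_i\mathds{1}$ and the scalar $c_i$ factored out, while the $h_i$-summand is, up to the constant $c_1\cdots c_{i-1}$, a polynomial entangled average over the shorter system $(T_{i+1},\dots,T_m;\,A_{i+1},\dots,A_{m-1})$ with starting vector $h_i\in E_{i+1,s}$ — all hypotheses of part~(1) inherited. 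For $i\le m-2$ this system has at least two transformations, so part~(1) shows it tends to $0$ a.e.; for $i=m-1$ it equals $c_1\cdots c_{m-2}\cdot\frac1N\sum_nT_m^{q_{\alpha(m)}(n)}h_{m-1}$, which tends to $c_1\cdots c_{m-2}\bigl(\int_X h_{m-1}\,\mr{d}\mu\bigr)\mathds{1}=0$ a.e.\ by the weakly-mixing clause of Proposition~\ref{prop:bourgain}. After $m-1$ steps the surviving term is $\bigl(\prod_{i=1}^{m-1}c_i\bigr)\frac1N\sum_nT_m^{q_{\alpha(m)}(n)}\mathds{1}=\bigl(\prod_{i=1}^{m-1}\langle A_i\mathds{1},\mathds{1}\rangle\bigr)\mathds{1}$; reinstating the factor $\langle f,\mathds{1}\rangle$ from the reduction gives the claimed limit.

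The scheme is structurally that of Theorem~\ref{thm:main}, so I expect the only genuinely new difficulty to be the verification that the coefficient sequences $(\lambda_{*,n})_n$ produced along the polynomial indices still lie in $\mc{N}$ — precisely the content of Proposition~\ref{prop:uaws_pol}/Corollary~\ref{cor:awps}, and the place where weak mixing (not mere ergodicity) of the $T_j$ is essential — together with the substitution of Bourgain's polynomial pointwise ergodic theorem for Birkhoff's, which forces $1<p<\infty$ and, in part~(2), is responsible for the limit being the stated $L^\infty$ function. The remaining bookkeeping (the $m=2,3$ base cases, the induction to larger $m$, the $L^\infty$-control of the remainders via (A2), and the harmless passage from $\frac1{N^k}$-sums to sums over the variables that actually occur) merely repeats Section~\ref{sec:general-case}.
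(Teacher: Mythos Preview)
Your proposal is correct and, for part~(1), follows the paper's proof essentially verbatim: the same two substitutions (Corollary~\ref{cor:awps} in place of the direct appeal to the definition of $E_{1,s}$, and Proposition~\ref{prop:bourgain} in place of Birkhoff) are exactly what the paper does, and your identification of these as the only genuinely new points is accurate.

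For part~(2) your argument is a mild but pleasant streamlining of the paper's. The paper keeps the full machinery of Theorem~\ref{thm:main}(2): at each stage it still invokes (A1), obtains finite-dimensional pieces together with $L^\infty$-small remainders $r_*$ contributing $\le 2\varepsilon$, and only then notes that, because each $E_{i,r}=\mb{C}\mathds{1}$, all reversible coefficient sequences are identically~$1$. You instead exploit immediately that $T_i^{q_{\alpha(i)}(n)}\mathds{1}=\mathds{1}$, so $A_iT_i^{q_{\alpha(i)}(n)}\mathds{1}=A_i\mathds{1}$ is independent of $n$ and can be split \emph{exactly} as $c_i\mathds{1}+h_i$ with $h_i\in E_{i+1,s}$; this sidesteps (A1) and the $\varepsilon$-remainder bookkeeping entirely, and the $h_i$-terms are dispatched by part~(1) applied to the shorter system (or, at the last step, by Proposition~\ref{prop:bourgain} directly). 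Both routes arrive at the limit $P_mA_{m-1}P_{m-1}\cdots A_1P_1f$, but yours gets there with fewer moving parts once the reversible spaces are known to be one-dimensional.
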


\begin{proof}
We shall follow the proof of Theorem \ref{thm:main}, splitting functions in the exact same way, and then take a closer look at why the convergences still hold when averaging along polynomial subsequences.\\
For part (1), the terms $r_n$ are still uniformly small in $L^\infty$, and the Cesàro averages remain small, regardless of the polynomial subsequences. Also, the coefficient sequences $\lambda_{*}$ are now subsequences of the ones in Theorem \ref{thm:main}, and so still bounded. In addition to this boundedness of coefficientss, the sequences $(\lambda_{j,n})_{n\in\mb{N}}$ being in $\mc{N}$ meant that the remainder terms $r_{*,n}$ arising from all futher decompositions average out to zero, and the same applies to the contribution of the bounded terms $\widetilde{g}_{b,*}$. In this polynomial setting this still holds by Corollary \ref{cor:awps}, and so we only have the terms with $g_{b,*}-\widetilde{g}_{b,*}$ left.\\
For these, make use of the fact that all sequences $\lambda_*$ are bounded, and instead of Birkhoff's pointwise ergodic theorem, apply Bourgain's polynomial version, Proposition \ref{prop:bourgain} in the weakly mixing case, to obtain that these terms also average out to a total contribution of at most $\varepsilon$ to the limsup.\\
For part (2), it suffices to extend the proof of part (2) of Theorem \ref{thm:main}, and then investigate the limit. Here the key is that the reversible spaces $E_{i,r}$ are all one dimensional, spanned by $\mathds{1}$, with eigenvalue 1. Hence all coefficient sequences $\lambda_*$ arising from the reversible parts are constant 1, and as such average out to 1 even along polynomial subsequences. Therfore part (1) can be applied whenever a stable term $g_*^s$ comes into play, the remainder terms $r_*$ have a total contribution of at most $2\varepsilon$ in each splitting step, and we are left with the average of the terms that arise when at each split we choose the reversible part, with all coefficients $\lambda_*$ equal to 1. Pointwise convergence then follows from Proposition \ref{prop:bourgain}.\\
When it comes to the limit, by the previous we have that the limit is determined by the purely reversible parts. Denoting by
\[P_i:=\lim_{N\to\infty}\frac{1}{N}\sum_{n=1}^N T_i^n
\]
the mean ergodic projection onto $E_{i,r}=\mr{Fix}(T_i)$ corresponding to $T_i$, this is equal to
\[
P_mA_{m-1}P_{m-1}\ldots A_1P_1f=\left(\langle f,\mathds{1}\rangle\prod_{i=1}^{m-1}\langle A_i\mathds{1},\mathds{1}\rangle\right)\cdot\mathds{1}.
\]
\end{proof}

In addition, weak mixing allows us to extend the convergence proven in Theorem \ref{thm:main} to the reversible part for all $1\leq p<\infty$.

\begin{Thm}\label{thm:main_Lp}
Let $m>1$ and $k$ be positive integers, $\alpha:\left\{1,\ldots,m\right\}\to\left\{1,\ldots,k\right\}$ a not necessarily surjective map, and $T_1,T_2,\ldots, T_m$ weakly mixing measure preserving transformations on a standard probability space $(X,\mu)$. Let $p\in[1,\infty)$, $E:=L^p(X,\mu)$ and let $E=E_{j,r}\oplus E_{j,s}$ be the Jacobs-Glicksberg-deLeeuw decomposition corresponding to $T_j$ $(1\leq j\leq m)$. Let further $A_j\in\mc{L}(E)$ $(1\leq j< m)$ be bounded operators.
Suppose that the conditions (A1) and (A2) of Theorem \ref{thm:main} hold.\\
Then we have for each $f\in E$ that the averages 
\[
\frac{1}{N^k}\sum_{1\leq n_1,\ldots, n_k\leq N} T_m^{n_{\alpha(m)}}A_{m-1}T^{n_{\alpha(m-1)}}_{m-1}\ldots A_2T_2^{n_{\alpha(2)}}A_1T_1^{n_{\alpha(1)}} f
\]
converge pointwise a.e. to the constant function
\[
\left(\langle f,\mathds{1}\rangle\prod_{i=1}^{m-1}\langle A_i\mathds{1},\mathds{1}\rangle\right)\cdot\mathds{1}.
\]
\end{Thm}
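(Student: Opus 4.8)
The plan is to follow the proof of part (2) of Theorem \ref{thm:main_poly}, specialised to the linear case (all $q_i(n)=n$), which lets me drop Bourgain's polynomial theorem and use only Birkhoff's pointwise ergodic theorem together with part (1) of Theorem \ref{thm:main}. The first step is to record that, under weak mixing, the reversible part of the Jacobs--Glicksberg--deLeeuw decomposition attached to each $T_j$ is just $\mb{C}\mathds{1}$: for $1<p<\infty$ this is Lemma \ref{le:weak_mix}, and for $p=1$ I would reduce to it by observing that any $g\in L^1$ with $T_jg=\beta g$, $|\beta|=1$, satisfies $|g|=T_j|g|$, so $|g|$ is constant by ergodicity, whence $g\in L^\infty\subset L^2$ and the $p=2$ case applies. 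I would also note that for $g\in E_{j,s}$ the sequence $\langle T_j^n g,\mathds{1}\rangle=\langle g,\mathds{1}\rangle$ is constant and lies in $\mc{N}$, hence vanishes; consequently, for any $h\in E$, the splitting $h=\langle h,\mathds{1}\rangle\mathds{1}+h_s$ with $h_s\in E_{j,s}$ is exactly the $T_j$-decomposition of $h$, the constant $\langle h,\mathds{1}\rangle$ being recovered by pairing with $\mathds{1}\in E'$.

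Given $f\in E$, I would first split it along $T_1$ as $f=\langle f,\mathds{1}\rangle\mathds{1}+f_s$ with $f_s\in E_{1,s}$; by part (1) of Theorem \ref{thm:main} the entangled averages of $|T_m^{n_{\alpha(m)}}A_{m-1}\cdots A_1T_1^{n_{\alpha(1)}}f_s|$, and hence the signed averages, tend to $0$ a.e. The remaining contribution is $\langle f,\mathds{1}\rangle\,T_m^{n_{\alpha(m)}}A_{m-1}\cdots A_2T_2^{n_{\alpha(2)}}(A_1\mathds{1})$, using $T_1^n\mathds{1}=\mathds{1}$. Here I would iterate: at stage $j$ (starting from $j=1$) split $A_j\mathds{1}=\langle A_j\mathds{1},\mathds{1}\rangle\mathds{1}+g^{(j+1)}$ along $T_{j+1}$, with $g^{(j+1)}\in E_{j+1,s}$. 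The piece carrying $g^{(j+1)}$ gives a term of the form $(\text{constant})\cdot T_m^{n_{\alpha(m)}}A_{m-1}\cdots A_{j+1}T_{j+1}^{n_{\alpha(j+1)}}g^{(j+1)}$; when $j\le m-2$ this is, after relabelling the transformations $T_{j+1},\ldots,T_m$ and using that (A1) and (A2) are inherited by subfamilies of the pairs $(A_i,T_i)$, an entangled average covered by part (1) of Theorem \ref{thm:main} (with $g^{(j+1)}$ in the stable part of the leading transformation), so its averages vanish a.e.; when $j=m-1$ only $T_m$ remains and the term is $(\text{constant})\cdot\frac{1}{N}\sum_{n=1}^NT_m^ng^{(m)}$, which converges a.e.\ to $(\text{constant})\cdot\big(\int_X g^{(m)}\,d\mu\big)\mathds{1}=0$ by Birkhoff's theorem. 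The $\mathds{1}$-piece is passed to stage $j+1$. After $m-1$ stages the sole surviving summand is $\big(\langle f,\mathds{1}\rangle\prod_{i=1}^{m-1}\langle A_i\mathds{1},\mathds{1}\rangle\big)T_m^{n_{\alpha(m)}}\mathds{1}=\big(\langle f,\mathds{1}\rangle\prod_{i=1}^{m-1}\langle A_i\mathds{1},\mathds{1}\rangle\big)\mathds{1}$, which is independent of all $n_i$, so its multi-Cesàro average equals this constant function for every $N$. Summing the finitely many contributions yields pointwise a.e.\ convergence to the claimed limit; note this is precisely $P_mA_{m-1}P_{m-1}\cdots A_1P_1f$ with $P_i$ the mean ergodic projection onto $\mr{Fix}(T_i)$, matching Theorem \ref{thm:main_poly}(2).

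I do not expect a serious obstacle here: once Theorem \ref{thm:main} is in hand, the proof is essentially bookkeeping, and the weak-mixing hypothesis does the real work by collapsing every reversible subspace to the constants, which both terminates the iteration with a single constant term and fixes the value of the limit. The two points I would be careful about are extending the one-dimensionality of $E_{j,r}$ to the case $p=1$ (handled above by passing to $L^2$) and, at the final stage $j=m-1$, invoking Birkhoff's theorem directly since Theorem \ref{thm:main}(1) is stated only for $m>1$ factors.
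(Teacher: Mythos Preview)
Your argument is correct and rests on the same pivot as the paper's proof: weak mixing collapses each $E_{j,r}$ to $\mb{C}\mathds{1}$, so the reversible track becomes trivial and the stable pieces are handled by part~(1) of Theorem~\ref{thm:main}. The paper, however, frames this as ``the proof of Theorem~\ref{thm:main}(2) goes through for general $p$'': it re-enters the (A1) splitting machinery at each stage, decomposing $A_iT_i^n g^r_*$ into finite-rank pieces plus an $L^\infty$-small remainder, and then observes that the resulting coefficient sequences $\lambda_{*,n}$, being read off the one-dimensional reversible spaces, are constant and hence almost periodic. Your iteration is more direct: since $g^r_*$ is a multiple of $\mathds{1}$ and $T_i^n\mathds{1}=\mathds{1}$, the orbit $A_iT_i^n\mathds{1}=A_i\mathds{1}$ is constant in $n$, so there is nothing to approximate via (A1) on the reversible track and no $\varepsilon$-remainder bookkeeping is needed there at all. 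This buys a cleaner proof that uses Theorem~\ref{thm:main}(1) purely as a black box. You also take care of the case $p=1$ explicitly (Lemma~\ref{le:weak_mix} is stated only for $1<p<\infty$), which the paper's proof invokes without comment; your reduction via $|g|=T_j|g|\Rightarrow g\in L^\infty\subset L^2$ is the standard fix. Finally, your separate treatment of the last stage $j=m-1$ via Birkhoff (since Theorem~\ref{thm:main}(1) requires $m>1$) is a detail the paper's route absorbs implicitly.
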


\begin{proof}
We again follow in the steps of the proof of Theorem \ref{thm:main}. Writing $f=\langle f,\mathds{1}\rangle\cdot\mathds{1}+(f-\langle f,\mathds{1}\rangle\cdot\mathds{1})$, we have by Lemma \ref{le:weak_mix} that the second term lies in $E_{1,s}$. By Theorem \ref{thm:main} that part averages out to 0, so we only need to prove convergence for the function $\mathds{1}$. The key to part (2) of Theorem \ref{thm:main} was showing that the coefficients $\lambda_{*,n}$ form an almost periodic sequence through the use of a basis of eigenfunctions in the reversible parts $E_{i,r}$. This would in general not be possible for $L^p$ spaces, as a non-orthogonal decomposition of a function into an infinite sum $\sum_{v\in V} d_vh_v$ would leave us with no good bound on the coefficients $d_v$. However now all our operators $T_i$ are weakly mixing, and hence the decomposition of a function $f\in E_{i,r}$ is the trivial $d\cdot\mathds{1}$ for the appropriate $d\in\mb{C}$, with the corresponding eigenvalue being 1. Consequently all coefficient sequences $\lambda_{*,n}$ arising from rotational parts are constant sequences, and a fortiori almost periodic. Therefore the proof presented for Theorem \ref{thm:main} goes through in this case as well.\\
Concerning the limit function itself, note that the proof actually yields (by part (1) of Theorem \ref{thm:main}) that any stable part arising during the splitting averages to zero, and the remainder terms $r_*$ are uniformly small. Hence, again denoting by $P_i$ the mean ergodic projection onto $E_{i,r}=\mr{Fix}(T_i)$ corresponding to $T_i$, the limit function is easily seen to be
\[
P_mA_{m-1}P_{m-1}\ldots A_1P_1f=\left(\langle f,\mathds{1}\rangle\prod_{i=1}^{m-1}\langle A_i\mathds{1},\mathds{1}\rangle\right)\cdot\mathds{1}.
\]
\end{proof}

%
%
%
%

\section{The continuous case}\label{sec:ex}

In this section, we finally turn our attention to a variant of the above results, where we replace the discrete action of the measure preserving operators with the continuous action of measure preserving flows. In other words, the semigroups $\{T_i^n|n\in\mb{N}^+\}$ are replaced by semigroups $\{T_i(t)|t\in[0,\infty)\}$.\\
Note that both Birkhoff's pointwise ergodic theorem, and the Jacobs-Glicksberg-deLeeuw decomposition possess a corresponding continuous version (for the latter, see e.g.~\cite[Theorem III.5.7]{eisner-book}).  Hence, with an analogous proof, we obtain the following continuous version of Theorem \ref{thm:main}.

\begin{Thm}\label{thm:main-cont}
Let $m>1$ and $k$ be positive integers, $\alpha:\left\{1,\ldots,m\right\}\to\left\{1,\ldots,k\right\}$ a not necessarily surjective map and let 
 $(T_1(t))_{t\geq 0}$,$\ldots $,$(T_m(t))_{t\geq 0}$ be ergodic measure preserving flows on a standard probability space $(X,\mu)$.
Let $p\in[1,\infty)$, $E:=L^p(X,\mu)$ and let $E=E_{j,r}\oplus E_{j,s}$ be the Jacobs-Glicksberg-deLeeuw decomposition corresponding to $T_j(\cdot)$ $(1\leq j\leq m)$. Let further $A_j\in\mc{L}(E)$ $(1\leq j< m-1)$ be bounded operators. For a function $f\in E$ and an index $1\leq j\leq m-1$, write $\ms{A}_{j,f}:=\left\{A_jT_j(t)f\left|\right.t\in[0,\infty)\right\}$. Suppose that the following conditions hold:
\begin{itemize}
\item[(A1c)](Twisted compactness)
For any function $f\in E$, index $1\leq j\leq m-1$ and $\varepsilon>0$, there exists a decomposition $E=\mc{U}\oplus \mc{R}$ with $\dim \mc{U}<\infty$
such that
\[
P_{\mc{R}}\ms{A}_{j,f}
\subset B_\varepsilon(0,L^\infty(X,\mu)),
\]
with $P_{\mc{R}}$ denoting the projection onto $\mc{R}$ along $\mc{U}$.
\item[(A2c)](Joint $\mc{L}^\infty$-boundedness)
There exists a constant $C>0$ such that we have
\[\{A_jT_j(t)|\,t\in[0,\infty),1\leq j\leq m-1\}\subset B_C(0,\mc{L}(L^\infty(X,\mu)).
\]
\end{itemize}
Then we have the following:
\begin{enumerate}
\item for each $f\in E_{0,s}$, 
\[
\lim_{\mc{T}\to\infty}\frac{1}{\mc{T}^k}\int_{\left\{t_1,\ldots, t_k\right\}\in [0,\mc{T}]^k} \left|T_m(t_{\alpha(m)})
\ldots A_2T_2(t_{\alpha(2)})A_1T_1(t_{\alpha(1)}) f \right|\rightarrow 0
\]
pointwise a.e.;
\item if $p=2$, then for each $f\in E_{1,r}$, 
\[
\frac{1}{\mc{T}^k}\int_{\left\{ t_1,\ldots, t_k\right\}\in [0,\mc{T}]^k} T_m(t_{\alpha(m)})A_{m-1}T_{m-1}(t_{\alpha(m-1)})\ldots A_2T_2(t_{\alpha(2)})A_1T_1(t_{\alpha(1)}) f
\]
converges pointwise a.e..
\end{enumerate}
\end{Thm}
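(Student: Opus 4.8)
The plan is to mirror the proof of Theorem \ref{thm:main} almost verbatim, since all the ingredients used there have direct continuous analogues. The only structural change is that multi-Cesàro means over $\{1,\ldots,N\}^k$ are replaced by normalized integrals over $[0,\mc{T}]^k$, and powers $T_i^n$ by the flow operators $T_i(t)$. First I would fix $f\in E_{1,s}$ (or $E_{0,s}$ in the notation of the statement) and $\varepsilon>0$, and apply condition (A1c) exactly as (A1) was applied: obtain a finite-dimensional $\mc{U}\subset E$ with complement $\mc{R}$ so that $P_{\mc{R}}\ms{A}_{1,f}$ lies in a small $L^\infty$-ball, pick a basis $g_1,\ldots,g_\ell$ of $\mc{U}$ and biorthogonal functionals $\varphi_1,\ldots,\varphi_\ell\in E'$, and write $A_1T_1(t)f=\sum_j\lambda_j(t)g_j+r_t$ with $\|r_t\|_\infty$ small and $\lambda_j(t)=(A_1^*\varphi_j)(T_1(t)f)$. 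One must note that since $f\in E_{1,s}$, the function $t\mapsto\lambda_j(t)$ is a bounded measurable function whose averages $\frac{1}{\mc{T}}\int_0^{\mc{T}}|\lambda_j(t)|\,dt$ tend to $0$ (the continuous analogue of $\mc{N}$, which holds because $(\varphi(T_1(t)f))$ being in the continuous stable part means precisely this Cesàro-integral decay).

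Next, the splitting and term-by-term estimation proceed as in the discrete case. The $r_t$-term is controlled by (A2c) and the fact that each $T_m(t)$ is an $L^\infty$-isometry, giving a contribution $\leq\varepsilon$ to the limsup for a.e.\ $x$. For the remaining terms involving $\lambda_j(t)g_j$, in the case $m=2$ I would approximate $g_j$ by $\widetilde g_j\in L^\infty$ in $L^p$-norm, use the decay of $\frac{1}{\mc{T}}\int_0^{\mc{T}}|\lambda_j(t)|\,dt$ together with (A2c) for the $\widetilde g_j$-part, and apply the \emph{continuous} Birkhoff ergodic theorem to $T_2(\cdot)$ and $|g_j-\widetilde g_j|$ for the remainder, exactly as in the discrete proof. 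For $m\geq 3$ one iterates: apply (A1c) again to $A_2,T_2$, peel off remainder terms (which now contribute zero thanks to the Cesàro-integral decay of the $\lambda_j$), and descend to $T_3,\ldots,T_m$, finishing with the continuous Birkhoff theorem applied to $T_m(\cdot)$. This establishes part (1). For part (2) with $p=2$ and $f\in E_{1,r}$, I would use the continuous Jacobs--Glicksberg--deLeeuw decomposition: expand $f$ in an orthonormal eigenbasis of the (now one-parameter unitary) group restricted to $E_{1,r}$, with eigenvalues $e^{i\theta_v t}$; then $\lambda_j(t)=\sum_v e^{i\theta_v t}d_v\langle h_v,A_1^*\varphi_j\rangle$ is a continuous almost periodic function (an absolutely convergent sum of characters, the continuous analogue of Example \ref{ex:alm-per}(2)). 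Splitting each $g_j$ into its $T_2$-stable and $T_2$-reversible parts, the stable part is handled by part (1), and for the reversible part one iterates the decomposition down to $T_m$, at each stage the coefficient functions remaining almost periodic, hence good weights for the continuous pointwise ergodic theorem and Cesàro-integral convergent, which suffices to conclude pointwise convergence.

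The one genuinely new point requiring care is the measurability and the precise form of the continuous-time replacements. Concretely, one needs: (i) that the map $(t_1,\ldots,t_k,x)\mapsto T_m(t_{\alpha(m)})\cdots A_1T_1(t_{\alpha(1)})f(x)$ is jointly measurable so that the integral defining the average makes sense and Fubini applies — this follows from strong continuity of the flows on $L^p$ together with measurability of the $A_i$; (ii) a continuous version of the Koopman--von Neumann characterization, i.e.\ that $(\varphi(T(t)f))_{t\geq0}$ lying in the stable part is equivalent to $\frac{1}{\mc{T}}\int_0^{\mc{T}}|\varphi(T(t)f)|\,dt\to0$; and (iii) the continuous Wiener--Wintner/good-weight statement that a continuous almost periodic function is a good weight for the continuous pointwise ergodic theorem. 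All three are standard (see \cite{eisner-book}), so the main obstacle is not conceptual but purely one of bookkeeping: verifying that the nested application of (A1c) terminates after $m-1$ steps with the number of split-off functions at each stage depending only on the current function and $\varepsilon$, exactly as in the discrete proof. I would therefore only indicate the $m=2$ and $m=3$ cases in detail and remark that the induction on $m$ and the passage to the continuous setting introduce no further difficulties beyond those already addressed in the proof of Theorem \ref{thm:main}.
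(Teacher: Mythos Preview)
Your proposal is correct and follows exactly the approach the paper intends: the paper does not give a separate proof for Theorem~\ref{thm:main-cont} at all, but simply notes that both Birkhoff's pointwise ergodic theorem and the Jacobs--Glicksberg--deLeeuw decomposition have continuous counterparts (citing \cite[Theorem~III.5.7]{eisner-book}) and states that the result follows ``with an analogous proof''. Your write-up is therefore considerably more detailed than the paper's own treatment, and the technical points you flag---joint measurability, the continuous Koopman--von Neumann lemma, and continuous almost periodic functions being good weights---are precisely the ingredients one would need to make the analogy rigorous.
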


Since Lemma \ref{le:weak_mix} also extends to the time-continuous case, we have the following continuous version of Theorem \ref{thm:main_Lp}, with a proof analogous to the original.

\begin{Thm}\label{thm:main-cont_Lp}
Let $m>1$ and $k$ be positive integers, $\alpha:\left\{1,\ldots,m\right\}\to\left\{1,\ldots,k\right\}$ a not necessarily surjective map and let $(T_1(t))_{t\geq 0}$,$\ldots $,$(T_m(t))_{t\geq 0}$ be 
weakly mixing measure preserving flows on a standard probability space $(X,\mu)$.
Let $p\in[1,\infty)$, $E:=L^p(X,\mu)$ and let $E=E_{j,r}\oplus E_{j,s}$ be the Jacobs-Glicksberg-deLeeuw decomposition corresponding to $T_j(\cdot)$ $(1\leq j\leq m)$. Let further $A_j\in\mc{L}(E)$ $(1\leq j< m-1)$ be bounded operators. 
Suppose that the conditions (A1c) and (A2c) from Theorem \ref{thm:main-cont_Lp} hold.

Then for each $f\in E$, the multi-Cesàro averages
\[
\frac{1}{\mc{T}^k}\int_{\left\{ t_1,\ldots, t_k\right\}\in [0,\mc{T}]^k} T_m(t_{\alpha(m)})A_{m-1}T_{m-1}(t_{\alpha(m-1)})\ldots A_2T_2(t_{\alpha(2)})A_1T_1(t_{\alpha(1)}) f
\]
converge pointwise a.e. to the constant function
\[
\left(\langle f,\mathds{1}\rangle\prod_{i=1}^{m-1}\langle A_i\mathds{1},\mathds{1}\rangle\right)\cdot\mathds{1}.
\]
\end{Thm}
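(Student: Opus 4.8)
The plan is to reduce Theorem \ref{thm:main-cont_Lp} to the machinery already assembled, rather than redoing the splitting argument from scratch. First I would invoke the continuous version of Lemma \ref{le:weak_mix}: since each flow $(T_1(t))_{t\geq 0}$ is weakly mixing, the reversible part $E_{1,r}$ is one-dimensional and spanned by $\mathds{1}$. Hence, writing $f=\langle f,\mathds{1}\rangle\mathds{1}+(f-\langle f,\mathds{1}\rangle\mathds{1})$, the second summand lies in $E_{1,s}$, and by part (1) of Theorem \ref{thm:main-cont} its entangled multi-Cesàro average converges pointwise a.e. to $0$. So it suffices to treat the single function $\mathds{1}$.

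For the function $\mathds{1}$, I would run the splitting scheme of the proof of Theorem \ref{thm:main} in its time-continuous incarnation: apply (A1c) to $(A_1,T_1(\cdot))$ to decompose $A_1T_1(t)\mathds{1}$ into a finite linear combination $\sum_j\lambda_{j,t}g_j$ plus an $L^\infty$-small remainder; feed the difficult $g_j$ terms forward to $T_2(\cdot)$, etc. The one genuinely new issue over the discrete case is already resolved exactly as in the proof of Theorem \ref{thm:main_Lp}: one cannot produce a basis of eigenfunctions with $\ell^1$-controlled coefficients in a general $L^p$ space, but because every $T_i(\cdot)$ is weakly mixing, every $E_{i,r}$ is spanned by $\mathds{1}$ with eigenvalue $1$, so whenever we split a function into its $T_{i+1}$-stable and $T_{i+1}$-reversible parts, the reversible part is simply a constant multiple of $\mathds{1}$, and the resulting coefficient sequence (now a coefficient function of $t$) is constant, hence trivially almost periodic and trivially a good weight for the continuous pointwise ergodic theorem. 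The stable parts average out by part (1), the $L^\infty$-small remainders contribute at most $O(\varepsilon)$ as in the discrete proof, and since $\varepsilon>0$ is arbitrary we get pointwise a.e.\ convergence.

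To identify the limit, I would observe that by the above, all contributions except the ''all-reversible'' branch vanish in the limit, and along that branch every intermediate $T_i(\cdot)$-average acts as the mean ergodic projection $P_i$ onto $E_{i,r}=\mr{Fix}(T_i(\cdot))$, which for a weakly mixing flow is the rank-one projection $g\mapsto\langle g,\mathds{1}\rangle\mathds{1}$. Composing, the limit function is
\[
P_mA_{m-1}P_{m-1}\cdots A_1P_1 f=\left(\langle f,\mathds{1}\rangle\prod_{i=1}^{m-1}\langle A_i\mathds{1},\mathds{1}\rangle\right)\cdot\mathds{1},
\]
where the pointwise existence of each $T_i(\cdot)$-Cesàro limit comes from the continuous Birkhoff theorem and the interchange of the $k$ parameter-averages is justified exactly as in the cases $m=2,3$ of Theorem \ref{thm:main} (grouping the parameters $t_b$ according to the fibers $\alpha^{-1}(b)$, using good-weight behaviour for the fiber containing $\alpha(i)$ and plain Cesàro convergence of the constant coefficient functions otherwise).

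I expect no serious obstacle: the substantive analytic content — the splitting, the twisted-compactness bookkeeping, the handling of the stable and reversible parts, and the continuous Birkhoff theorem — is all imported from Theorems \ref{thm:main}, \ref{thm:main_Lp} and \ref{thm:main-cont}. The only points requiring care are purely bookkeeping: verifying that the continuous Jacobs-Glicksberg-deLeeuw decomposition (cited from \cite[Theorem III.5.7]{eisner-book}) and the continuous Birkhoff theorem slot into the argument without changes, and that the $\mathcal{N}$-type ''$\frac{1}{\mathcal{T}}\int_0^{\mathcal{T}}|a(t)|\,dt\to0$'' conditions behave under the continuous Koopman-von Neumann lemma the same way their discrete counterparts do. These are routine, so the proof is essentially ''follow the proof of Theorem \ref{thm:main_Lp}, mutatis mutandis, in continuous time.''
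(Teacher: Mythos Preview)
Your proposal is correct and follows essentially the same route as the paper: the paper simply states that Theorem \ref{thm:main-cont_Lp} is the continuous analogue of Theorem \ref{thm:main_Lp} with an analogous proof, and what you have written is precisely that analogue spelled out---decompose $f$ via the continuous version of Lemma \ref{le:weak_mix}, kill the stable part using part (1) of Theorem \ref{thm:main-cont}, and handle $\mathds{1}$ by the splitting scheme with the observation that weak mixing makes all reversible coefficient functions constant. Your identification of the limit as $P_mA_{m-1}P_{m-1}\cdots A_1P_1 f$ also matches the paper exactly.
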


\end{document}